\newtheorem{theorem}{Theorem}[section]
\newtheorem{lemma}[theorem]{Lemma}
\newtheorem{proposition}[theorem]{Proposition}
\theoremstyle{definition}
\newtheorem{definition}[theorem]{Definition}
\theoremstyle{remark}
\newtheorem{remark}[theorem]{Remark}
\numberwithin{equation}{section}
\begin{document}

\title {Two-dimensional moment problem and Schur algorithm}

\author{Ivan Kovalyov and Stefan Kunis}



\maketitle

\begin{abstract}
We study a truncated two-dimensional moment problem in terms of the Stieltjes transform. The set of the  solutions is described by the Schur step-by-step algorithm, which is based on the continued fraction expansion of the solution. In particular, the obtained results are applicable  to the two-dimensional moment problem for  atomic measures.
\end{abstract}

\tableofcontents

\section{Introduction}

A  Stieltjes  moment problem was studied in~\cite{Akh,St89}.
Given a sequence of  real numbers $\textbf{s}=\{s_{i}\}_{i=0}^{\infty}$, find a
positive Borel measure $\sigma$ with a support on $\mathbb{R}_{+}$, such
that
\begin{equation}\label{int1}
    \int_{\mathbb{R}_{+} }t^jd\sigma(t)=s_{j},\qquad j\in\mathbb{Z}_{+}=\mathbb{N}\cup\{0\}.
\end{equation}
The problem \eqref{int1} with a finite  sequence
 $\mathbf{s}=\{s_{i}\}_{i=0}^{\ell}$
is called a truncated Stieltjes   moment problem, otherwise  it is called a full Stieltjes moment problem.

By the Hamburger--Nevanlinna theorem~\cite{Akh}, the truncated Stieltjes   moment problem can be reformulated in term
of the Stieltjes transform
\begin{equation*}\label{int3}
    f(z)=\int_{\mathbb{R}_{+}}\frac{d\sigma(t)}{t-z},\qquad z\in \mathbb{C}\backslash \mathbb{R}_{+},
\end{equation*}
of $\sigma$ as the following interpolation problem at infinity
\begin{equation*}\label{14.int.th_1}
	 f(z)=-\frac{s_{0}}{z}-\frac{s_{1}}{z^2}-\cdots-\frac{s_{\ell}}{z^{\ell+1}}
    +o\left(\frac{1}{z^{\ell+1}}\right),\quad\quad z\widehat{\rightarrow}\infty,
\end{equation*}
where the notation $z\widehat{\rightarrow}\infty$ means that $z\rightarrow\infty$  nontangentially, that is inside the
sector $\varepsilon<\arg z<\pi-\varepsilon$ for some
$\varepsilon>0$. 

Recall the result describing the first step of the Schur algorithm.
\begin{theorem}\label{14p.int.th_ind} (\cite{Akh}) Let $\mathbf{s}=\{s_{j}\}_{j=0}^{\ell}$ be a sequence of real numbers with $s_0\neq 0$ and let $f$ admit the asymptotic expansion \eqref{14.int.th_1}
Then $f$ takes the form
\begin{equation}\label{14.int.th_2}
	f(z)=-\cfrac{b_0}{a_0(z)+f_1(z)},
\end{equation}
where 
\begin{equation}\label{14.int.th_3}
	b_0=s_0, \quad a_0(z)=z-\cfrac{s_1}{s_0},\quad\mbox{and}\quad f_1(z)=-\sum_{j=0}^{\ell-2}\cfrac{s_j^{(1)}}{z^j},
\end{equation}
the recursive sequence $\mathbf{s}^{(1)}=\left\{s_j^{(1)}\right\}_{j=0}^{\ell-2}$ is defined by
 \begin{equation}\label{14.int.th_4}
s_{j}^{(1)}=\cfrac{(-1)^{j+1}}{s_0^{j+3}}\begin{vmatrix}
s_1& s_0 &0&\ldots& 0\\
\vdots& \ddots &\ddots&\ddots & \vdots\\
\vdots&  &\ddots&\ddots &0\\
\vdots&  & &\ddots & s_0\\
s_{j+2}& \ldots &\ldots&\ldots& s_1
\end{vmatrix},\quad j=\overline{1,\ell-2}.
\end{equation}

\end{theorem}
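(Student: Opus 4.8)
The plan is to read the continued fraction \eqref{14.int.th_2} backwards. Since $b_0=s_0\neq0$ and since \eqref{14.int.th_1} gives $zf(z)=-s_0+o(1)\to-s_0\neq0$ as $z\widehat{\rightarrow}\infty$, the function $f$ is finite and nonvanishing in a nontangential neighbourhood of infinity, so one may simply \emph{define}
\[
f_1(z):=-\frac{b_0}{f(z)}-a_0(z)=-\frac{s_0}{f(z)}-z+\frac{s_1}{s_0},
\]
and then \eqref{14.int.th_2} holds by construction. The whole content of the theorem is therefore the assertion that this $f_1$ admits the asymptotic expansion recorded in \eqref{14.int.th_3} with coefficients given by \eqref{14.int.th_4}; that is, the statement reduces to expanding the reciprocal $-s_0/f$ at infinity.

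To do that I would write \eqref{14.int.th_1} as $f(z)=-z^{-1}G(z)$ with $G(z)=s_0+s_1 z^{-1}+\dots+s_\ell z^{-\ell}+o(z^{-\ell})$, so that $-s_0/f(z)=z\cdot\bigl(s_0/G(z)\bigr)$. The reciprocal expands as $s_0/G(z)=\sum_{k=0}^{\ell}d_k z^{-k}+o(z^{-\ell})$, where $d_0=1$ and the coefficients are determined by the convolution identity $\sum_{i=0}^{n}s_i d_{n-i}=0$ for $1\le n\le\ell$; the one genuinely analytic point here is that the $o(z^{-\ell})$ remainder of $G$ propagates to an $o(z^{-\ell})$ remainder of $s_0/G$, which follows from the uniform bound $|G(z)|\ge|s_0|/2$ near infinity. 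Multiplying by $z$ gives $-s_0/f(z)=d_0z+d_1+d_2z^{-1}+\dots+d_\ell z^{-(\ell-1)}+o(z^{-(\ell-1)})$; because $d_0=1$ and $d_1=-s_1/s_0$, the first two terms are precisely $a_0(z)$, and subtracting them leaves $f_1(z)=\sum_{m=1}^{\ell-1}d_{m+1}z^{-m}+o(z^{-(\ell-1)})$, which is \eqref{14.int.th_3} once the index shift and the normalisation of the $s^{(1)}_j$ in the statement are taken into account.

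It remains to convert the recursion for the $d_k$ into the closed determinantal form \eqref{14.int.th_4}. Holding $d_0=1$ fixed and writing out $\sum_{i=0}^{n}s_i d_{n-i}=0$ for $n=1,\dots,N$ produces a lower-triangular Toeplitz linear system for $(d_1,\dots,d_N)$ with all diagonal entries equal to $s_0$; Cramer's rule then writes $d_N$ as a quotient with denominator $s_0^N$ and numerator a determinant in the $s_i$, and reversing the order of rows and columns (while tracking the global sign $(-1)^{N}$) brings that numerator into exactly the banded lower-Hessenberg Toeplitz shape displayed in \eqref{14.int.th_4}. Specialising $N$ to the index demanded by the normalisation adopted for $\mathbf{s}^{(1)}$ yields the stated formula for $j=\overline{1,\ell-2}$; the remaining coefficient $s^{(1)}_0$, not covered by \eqref{14.int.th_4}, comes out directly from $d_2=(s_1^2-s_0s_2)/s_0^2$.

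I expect the main obstacle to be bookkeeping rather than any deep difficulty: one must propagate the nontangential $o(z^{-\ell})$ estimate cleanly through the reciprocal so that $f_1$ genuinely carries $\ell-1$ correct asymptotic coefficients, and simultaneously keep exact control of the signs and the powers of $s_0$ in passing from the triangular Toeplitz system to the Hessenberg determinant. Making the indices in \eqref{14.int.th_3}--\eqref{14.int.th_4} line up with the expansion coefficients $d_{m+1}$ is the spot where care is required.
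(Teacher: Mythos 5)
The paper does not actually prove this statement: it is recalled from \cite{Akh} as known, so there is no internal proof to compare against. Your strategy --- define $f_1:=-s_0/f-a_0$, expand the reciprocal $s_0/G$ via the convolution identity $\sum_{i=0}^n s_i d_{n-i}=0$, justify the propagation of the nontangential $o$-remainder by the lower bound $|G|\ge|s_0|/2$, and then express $d_N$ by Cramer's rule --- is the standard argument and is sound through the identification $f_1(z)=\sum_{m=1}^{\ell-1}d_{m+1}z^{-m}+o(z^{-(\ell-1)})$, i.e.\ $s_j^{(1)}=-d_{j+2}$.

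The one step you did not carry out is exactly the one that fails as literally stated. Expanding the $(j+2)\times(j+2)$ determinant $D_{j+2}$ of \eqref{14.int.th_4} along its first column gives the recursion $D_m=\sum_{i=1}^{m}(-1)^{i+1}s_i\,s_0^{\,i-1}D_{m-i}$, from which $D_m=(-1)^m s_0^{\,m}d_m$; hence
\begin{equation*}
-\,d_{j+2}\;=\;\frac{(-1)^{j+1}}{s_0^{\,j+2}}\,D_{j+2},
\end{equation*}
with denominator $s_0^{\,j+2}$, not the $s_0^{\,j+3}$ printed in \eqref{14.int.th_4}. The printed prefactor corresponds to the alternative normalisation $f=-1/\bigl(a_0(z)/s_0+f_1\bigr)$ (i.e.\ $b_0=1$), so \eqref{14.int.th_2}--\eqref{14.int.th_3} and \eqref{14.int.th_4} are mutually inconsistent by one factor of $s_0$; likewise \eqref{14.int.th_3} must read $z^{j+1}$ rather than $z^{j}$ (as the paper's own later use of $F_1(z,\zeta)=-s_0^{(1)}/z-\cdots$ confirms), and an $o(z^{-(\ell-1)})$ remainder is missing there. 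So your final sentence ``specialising $N$ \dots\ yields the stated formula'' is not quite true of the statement verbatim: carried to completion, your computation proves the corrected version (with $s_0^{\,j+2}$), and you should say so explicitly rather than defer to ``the normalisation adopted for $\mathbf{s}^{(1)}$''. Apart from pinning down that discrepancy, the argument is complete.
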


\textbf{Two-dimensional moment problem}. Now let $\mu$ be a nonnegative Borel measure on $\mathbb{R}^2$, where supp$(\mu)=A\subseteq \mathbb{R}^2$. The moment sequence 
 $\mathbf{s}=\{s_{i,j}\}_{i,j=0}^\ell$  is defined by
\begin{equation}\label{14.2.1}
s_{i,j}=\int\int_A t^i\tau^j d\mu(t,\tau).
\end{equation}

However, we can reformulate the moment problem \eqref{14.2.1}  in terms of the generalized moments.
 Let us define a linear functional  $\mathfrak{S}$ on the monomials $t^i\tau^j$ by
\begin{equation}\label{14.2.3S}
	\mathfrak{S}(t^i\tau^j)=s_{i,j}.
\end{equation}

In the present paper,  we study a two--dimensional problem in a similar way as the one--dimensional problem \eqref{14.int.th_1}, i.e., we   investigate the two--dimensional problem as the interpolation problem. Recall  a Stieltjes transform in two variables (see \cite{Cuyt})
\begin{equation}\label{14.int. st tr.1}
\int_0^\infty \int_0^\infty \cfrac{d\mu(t,\tau)}{1+\tilde zt+\tilde\zeta\tau}=\sum_{i,j=0}^\infty {i+j\choose i}(-1)^{i+j}s_{i,j}{\tilde z}^i{\tilde \zeta}^j, \quad \tilde{z},\tilde\zeta\in\mathbb{C}^2
\end{equation}

Hence, it is more convenient to define the associated function $F$ by
\begin{equation}\label{14.2.2}
 F(z, \zeta)=-\frac{1}{z\zeta}\int\int_A\frac{d\mu(t,\tau)}{1-\frac{t}{z}-\frac{\tau}{\zeta}}, \quad \mbox{where}\,\left|\frac{t}{z}+\frac{\tau}{\zeta}\right|<1, \, \tilde z=-\cfrac{1}{z}, \, \tilde\zeta=-\cfrac{1}{\zeta}.
\end{equation}

Moreover, \eqref{14.2.2} can be rewritten as
\begin{equation}\label{14.2.3}\begin{split}
    F(z, \zeta)&= -\frac{1}{z\zeta}\int\int_A\sum_{k=0}^\infty\left(\frac{t}{z}+\frac{\tau}{\zeta}\right)^k d\mu(t,\tau)=\\&=-\sum_{i,j=0}^\infty{i+j\choose i} s_{i,j}\frac{1}{z^{i+1}}\frac{1}{\zeta^{j+1}}=-\sum_{i=0}^{\infty}\frac{1}{z^{i+1}}\sum_{j=0}^\infty {i+j\choose i}\frac{s_{i,j}}{\zeta^{j+1}}=\\&=
    -\frac{1}{z}\sum_{j=0}^{\infty}\frac{s_{0,j}}{\zeta^{j+1}}-\frac{1}{z^2}\sum_{j=0}^{\infty}\frac{(j+1)s_{1,j}}{\zeta^{j+1}}-\frac{1}{z^3}\sum_{j=0}^{\infty}\frac{(j+1)(j+2)s_{2,j}}{\zeta^{j+1}}-\ldots=\\&\!=\!-\frac{1}{z}\sum_{j=0}^{n\!-\!2}\frac{s_{0,j}}{\zeta^{j\!+\!1}}\!-\!\frac{1}{z^2}\sum_{j=0}^{n\!-\!3}\frac{(j\!+\!1)s_{1,j}}{\zeta^{j\!+\!1}}\!-\!\ldots
    -\frac{1}{z^{n\!-\!2}}\sum_{j=0}^{1}{n\!+\!j\!-\!3\choose j}\frac{s_{n-3,j}}{\zeta^{j+1}}-\\&-\frac{s_{n-2,0}}{z^{n-1}\zeta}+o\left(\sum\limits_{k=0}^n\frac{1}{z^k \zeta^{n-k}}\right).
\end{split}
\end{equation}

Now, we briefly describe the content of the paper. The truncated two-dimensional problem of the non-symmetric form is solved in Section 2.  Section 3 contains the truncated two-dimensional problem in symmetric form and the description of its solutions. A so-called Stieltjes like case is studied in the Section 4, where  we find  the solutions of the truncated  two-dimensional problem in terms of  Slieltjes like fractions. A truncated two-dimensional problem for   atomic measures is studied in the Section 5.   The set of solutions of the full  two-dimensional problem is described in Section~6.

\section{Non-symmetric case}
In the current section, we study a truncated two-dimensional moment problem in the non-symmetric form, i.e., the solutions are not symmetric  with respect to the variables $z$ and $\zeta$. 

\begin{definition} Let $\mathbf{s}=\{s_j(z_1,\ldots, z_n)\}_{j=0}^{\ell}$ be a sequence such that $s_j(z_1,\ldots, z_n)$ depend of the variables $z_1,\ldots, z_n$. $\mathfrak{N}(\mathbf{s})$ is called a simple regular set of the sequence $\mathbf{s}$  defined by
\begin{equation}\label{14.2.18*}
	\mathfrak{N}(\mathbf{s})=\{(z_1,\ldots, z_n) |\quad D_i(z_1,\ldots, z_n) \neq 0 \quad\mbox{for all}\quad i=\overline{0,[\ell/2]}  \},
\end{equation}
where
\begin{equation}\label{14.2.18}
	D_i(z_1,\ldots, z_n) =\begin{vmatrix}
s_0(z_1,\ldots, z_n)& \ldots& s_{i-1}(z_1,\ldots, z_n)\\
  \vdots &\ddots &\vdots \\
s_{i-1}(z_1,\ldots, z_n) & \ldots  & s_{2i-2}(z_1,\ldots, z_n)
\end{vmatrix}.
\end{equation}

Moreover, $\mathbf{s}=\{s_j(z_1,\ldots, z_n)\}_{j=0}^{\ell}$ is called a simple regular sequence on the set  $\mathfrak{N}(\mathbf{s})$.
\end{definition}

\textbf{Truncated  problem $MP(\mathbf{s})$:}
Given a sequence of real numbers $\mathbf{s}=\{s_{i,j}\}_{i,j=0}^{i+j= n-2}$, describe the set $\mathcal{M}(\mathbf{s})$ of  functions $F$, which have the following asymptotic expansion
\begin{equation}\label{14.2.4}\begin{split}
	F(z,\zeta)&=-\frac{1}{z}\sum_{j=0}^{n-2}\frac{s_{0,j}}{\zeta^{j+1}}-\frac{1}{z^2}\sum_{j=0}^{n-3}\frac{(j+1)s_{1,j}}{\zeta^{j+1}}-\ldots
    -\\&-\frac{1}{z^{n-2}}\sum_{j=0}^{1}{n+j-3\choose j}\frac{s_{n-3,j}}{\zeta^{j+1}}-\frac{s_{n-2,0}}{z^{n-1}\zeta}+o\left(\sum\limits_{k=0}^n\frac{1}{z^k \zeta^{n-k}}\right).\end{split}
\end{equation}

Setting $s_k(\zeta)=\sum\limits_{j=0}^{n-k-2}{j+k\choose j}\frac{s_{k,j}}{\zeta^{j+1}}$, then \eqref{14.2.4}  can be rewritten as 
\begin{equation}\label{14.2.5}
	F(z,\zeta)=-\frac{s_0(\zeta)}{z}-\frac{s_1(\zeta)}{z^2}-\ldots--\frac{s_{n-2}(\zeta)}{z^{n-1}}+o\left(\frac{1}{z^{n-1}}\right).
\end{equation}

\subsection{Basic truncated    problem}

Let us consider a basic problem.  Given a sequence of real numbers $\mathbf{s}=\{s_{0,0}, s_{0,1}, s_{1,0}\}$, such that 
\begin{equation}\label{14.2.6}
	\frac{s_{0,0}}{\zeta}+\frac{s_{0,1}}{\zeta^2} \neq 0.
\end{equation}
Describe the set 
$\mathcal{M}(\mathbf{s})$ of  functions $F$, which have the  asymptotic expansion
\begin{equation}\label{14.2.7}
	F(z,\zeta)=
	-\frac{1}{z}\left(\cfrac{s_{0,0}}{\zeta}+\cfrac{s_{0,1}}{\zeta^2}\right)-\frac{s_{1,0}}{z^2\zeta}+o\left(\sum\limits_{k=0}^3\frac{1}{z^k \zeta^{3-k}}\right).
\end{equation}

Let us set
\begin{equation}\label{14.2.8}
	s_k(\zeta)=\sum\limits_{j=0}^{1-k}{j+k\choose j}\frac{s_{k,j}}{\zeta^{j+1}} .
\end{equation}
Hence, we can rewrite basic truncated problem as:

Given $\mathbf{s}(\zeta)=\{s_0(\zeta), s_1(\zeta)\}$, such that 
\begin{equation}\label{14.2.9}
s_0(\zeta)\neq0.
\end{equation}
Describe the set 
$\mathcal{M}(\mathbf{s})$ of  functions $F$, which admit the  asymptotic expansion
\begin{equation}\label{14.2.10}
	F(z,\zeta)=-\frac{s_0(\zeta)}{z}-\frac{s_1(\zeta)}{z^2}+o\left(\sum\limits_{k=0}^3\frac{1}{z^k \zeta^{3-k}}\right).\end{equation}

\begin{lemma}\label{14.L.2.1}
 Let $\mathbf{s}=\{s_{0,0}, s_{0,1}, s_{1,0}\}$,  such that \eqref{14.2.6} holds. Then any solution of the moment problem $MP(\mathbf{s})$  admits the  representation
 \begin{equation}\label{14.2.11}
	F(z,\zeta)=-\cfrac{s_{0,0}}{z\zeta-\cfrac{s_{1,0}}{\cfrac{s_{0,0}}{\zeta}+\cfrac{s_{0,1}}{\zeta^2}}+\zeta\tau(z,\zeta)},
\end{equation}
where the parameter $\tau$ satisfies 
 \begin{equation}\label{14.2.12}
	\tau(z,\zeta)=o\left(\cfrac{s_1(\zeta)}{s_0(\zeta)}\right).
\end{equation}
\end{lemma}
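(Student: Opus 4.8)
The plan is to perform one step of the Schur (continued-fraction) algorithm in the variable $z$, treating $\zeta$ as a parameter and regarding the Laurent polynomials $s_0(\zeta),s_1(\zeta)$ from \eqref{14.2.8} as the relevant ``moments'', and then to bring the resulting elementary continued fraction into the normalized form \eqref{14.2.11}.

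First I would note that \eqref{14.2.10} is precisely an asymptotic expansion of the type \eqref{14.int.th_1} with $\ell=1$, except that the real moments are replaced by the functions $s_0(\zeta),s_1(\zeta)$; since $s_0(\zeta)\neq 0$ by \eqref{14.2.6}, the leading term $-s_0(\zeta)/z$ does not vanish, so $F(z,\zeta)\not\equiv 0$ and $s_0(\zeta)/F(z,\zeta)$ is well defined for $z\widehat{\to}\infty$. Mimicking the first step of Theorem \ref{14p.int.th_ind}, I would set
\[
	f_1(z,\zeta):=-\frac{s_0(\zeta)}{F(z,\zeta)}-\Bigl(z-\frac{s_1(\zeta)}{s_0(\zeta)}\Bigr),
	\qquad\text{so that}\qquad
	F(z,\zeta)=-\cfrac{s_0(\zeta)}{z-\cfrac{s_1(\zeta)}{s_0(\zeta)}+f_1(z,\zeta)}
\]
identically. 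Substituting \eqref{14.2.10} and inverting the corresponding geometric series exactly as in the proof of Theorem \ref{14p.int.th_ind} (with $\ell=1$ the new sequence $\mathbf{s}^{(1)}$ is empty, so no terms of $f_1$ are prescribed) yields $f_1(z,\zeta)=o\!\left(s_1(\zeta)/s_0(\zeta)\right)$ in the limiting regime of \eqref{14.2.4}.

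Next I would normalize the continued fraction. Multiplying numerator and denominator of the last display by $s_{0,0}/s_0(\zeta)$ and using the elementary identities $\zeta s_0(\zeta)=s_{0,0}+s_{0,1}/\zeta$ and $s_1(\zeta)=s_{1,0}/\zeta$, one computes
\[
	\frac{s_{0,0}}{s_0(\zeta)}\Bigl(z-\frac{s_1(\zeta)}{s_0(\zeta)}+f_1\Bigr)=z\zeta-\frac{s_{1,0}}{s_0(\zeta)}+\zeta\tau(z,\zeta),
	\qquad
	\zeta\tau(z,\zeta)=-\frac{s_{0,1}\,z}{\zeta s_0(\zeta)}+\frac{s_{0,1}s_{1,0}}{\zeta^{2}s_0(\zeta)^{2}}+\frac{s_{0,0}}{s_0(\zeta)}\,f_1(z,\zeta),
\]
where the first two summands come from $\tfrac{s_{0,0}}{s_0(\zeta)}-\zeta=-\tfrac{s_{0,1}}{\zeta s_0(\zeta)}$ and $\tfrac{s_{1,0}}{s_0(\zeta)}-\tfrac{s_{0,0}s_1(\zeta)}{s_0(\zeta)^2}=\tfrac{s_{0,1}s_{1,0}}{\zeta^2 s_0(\zeta)^2}$. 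This already gives \eqref{14.2.11}. Finally, dividing the expression for $\tau$ by $s_1(\zeta)/s_0(\zeta)=s_{1,0}/(\zeta s_0(\zeta))$, the three summands become, up to constant factors, $z/\zeta$, $1/(\zeta^{2}s_0(\zeta))$, and $f_1$: the second tends to $0$ because $\zeta^{2}s_0(\zeta)=\zeta(s_{0,0}+s_{0,1}/\zeta)\to\infty$, the third by the bound on $f_1$ above, and the first by the way $z$ and $\zeta$ are coupled in the limit defining \eqref{14.2.4}. Hence $\tau(z,\zeta)=o\!\left(s_1(\zeta)/s_0(\zeta)\right)$, which is \eqref{14.2.12}.

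The algebraic identities and the geometric-series inversion are routine; the delicate point — and the one I expect to be the main obstacle — is the two-variable asymptotic bookkeeping: making precise the meaning of the error symbol in \eqref{14.2.7}/\eqref{14.2.10} and of the joint limit $(z,\zeta)\to\infty$ (in particular the relative growth of $z$ and $\zeta$, which controls the term $z/\zeta$ above), checking that the one-step Schur transform is legitimate when the moments depend on $\zeta$, and verifying that each correction term produced by the normalization is genuinely subordinate to $s_1(\zeta)/s_0(\zeta)$ in that regime; the hypothesis \eqref{14.2.6}, equivalently $s_0(\zeta)\neq0$, enters precisely here.
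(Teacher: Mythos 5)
Your proposal is correct and follows essentially the same route as the paper: reduce \eqref{14.2.7} to the one-variable expansion \eqref{14.2.10} via \eqref{14.2.8}, apply the first Schur step of Theorem~\ref{14p.int.th_ind} with the $\zeta$-dependent moments $s_0(\zeta),s_1(\zeta)$, and then renormalize the resulting fraction to the form \eqref{14.2.11}. In fact you carry out explicitly the algebra of that renormalization and identify the correction terms absorbed into $\tau$ (including the $z/\zeta$ term whose smallness depends on the joint limit regime), a step the paper's proof compresses into ``we simplify \eqref{14.2.14} and obtain \eqref{14.2.11}''.
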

\begin{proof}
Assume $F$ admits the  asymptotic expression \eqref{14.2.7} and \eqref{14.2.6} holds. By \eqref{14.2.8}, we can rewrite  \eqref{14.2.7} and  \eqref{14.2.10} is obtained. Hence, we reduce  the moment problem $MP(\mathbf{s})$ to  one-dimensional problem. Consequently,  any solution of $MP(\mathbf{s})$ takes the form
\begin{equation}\label{14.2.14}\begin{split}
 	F(z,\zeta)&=-\cfrac{b_0(\zeta)}{z-\cfrac{s_1(\zeta)}{s_0(\zeta)}+\tau(z,\zeta)}=-\cfrac{\cfrac{s_{0,0}}{\zeta}+\cfrac{s_{0,1}}{\zeta^2}}{z-\cfrac{\cfrac{s_{1,0}}{\zeta}}{\cfrac{s_{0,0}}{\zeta}+\cfrac{s_{1,0}}{\zeta^2}}+\tau(z,\zeta)}=\\&=-\cfrac{\cfrac{s_{0,0}}{\zeta}}{z-\cfrac{\cfrac{s_{1,0}}{\zeta}}{\cfrac{s_{0,0}}{\zeta}+\cfrac{s_{1,0}}{\zeta^2}}+\tau(z,\zeta)}
	-\cfrac{\cfrac{s_{0,1}}{\zeta^2}}{z-\cfrac{\cfrac{s_{1,0}}{\zeta}}{\cfrac{s_{0,0}}{\zeta}+\cfrac{s_{1,0}}{\zeta^2}}+\tau(z,\zeta)}.
\end{split}\end{equation}
We simplify \eqref{14.2.14} and obtain \eqref{14.2.11}, where $b_0(\zeta)=\frac{s_{0,0}}{\zeta}+\frac{s_{0,1}}{\zeta^2} $ and the parameter  $\tau$ satisfies \eqref{14.2.12}. This completes the proof.
\end{proof}

\subsection{Truncated   problem $MP(\mathbf{s},\mathfrak{N}(\mathbf{s}))$ }

\textbf{Problem $MP(\mathbf{s},\mathfrak{N}(\mathbf{s}))$}: Given a sequence $\mathbf{s}=\{s_{i,j}\}_{i,j=0}^{i+j= 2n-1}$, $n\in\mathbb{N}$. Describe the set 
$\mathcal{M}(\mathbf{s})$ of  functions $F$, which have the  asymptotic expansion
\begin{equation}\label{14.2.15}\begin{split}
	F(z,\zeta)&=-\frac{1}{z}\sum_{j=0}^{2n-1}\frac{s_{0,j}}{\zeta^{j+1}}-\frac{1}{z^2}\sum_{j=0}^{2n-2}\frac{(j+1)s_{1,j}}{\zeta^{j+1}}-\ldots
    -\\&-\frac{1}{z^{2n-1}}\sum_{j=0}^{1}{2n+j-2\choose j}\frac{s_{2n-2,j}}{\zeta^{j+1}}-\frac{s_{2n-1,0}}{z^{2n}\zeta}+o\left(\sum\limits_{k=0}^{2n+1}\frac{1}{z^k \zeta^{n-k}}\right).\end{split}
\end{equation}
In additional, if we put
\begin{equation}\label{14.2.16}
	s_k(\zeta)=\sum\limits_{j=0}^{2n-1-k}{j+k\choose j}\frac{s_{k,j}}{\zeta^{j+1}}\quad\mbox{for all }\quad k=\overline{0,2n-1},
\end{equation}
then we obtain the new sequence $\mathbf{s}(\zeta)=\{s_i\}_{i=0}^{2n-1}$, $\mathbf{s}(\zeta)$ is called an associated moment sequence with $\mathbf{s}$.

Moreover,  \eqref{14.2.15}   can be rewritten as 
\begin{equation}\label{14.2.17}
	F(z,\zeta)=-\frac{s_0(\zeta)}{z}-\frac{s_1(\zeta)}{z^2}-\ldots--\frac{s_{2n-1}(\zeta)}{z^{2n}}+o\left(\sum\limits_{k=0}^{2n+1}\frac{1}{z^k \zeta^{n-k}}\right).
\end{equation}

\begin{theorem}\label{14.2.th.2.1}
Let $\mathbf{s}=\{s_{i,j}\}_{i,j=0}^{i+j= 2n-1}$ be a sequence of real numbers and let   $\mathbf{s}(\zeta)=\{s_k\}_{k=0}^{2n-1}$ be the associated moment sequence of $\mathbf{s}$, which is  defined by~\eqref{14.2.16} and $\mathbf{s}(\zeta)$ is the simple regular sequence on the set  $\mathfrak{N}(\mathbf{s})$. Then any solution of the  problem $MP(\mathbf{s},\mathfrak{N}(\mathbf{s}))$  takes the representation
\begin{equation}\label{14.2.19}
 	F(z,\zeta)=-\cfrac{b_0(\zeta)}{z-\cfrac{s_1(\zeta)}{s_0(\zeta)}-\cfrac{b_1(\zeta)}{z-\cfrac{s_1^{(1)}(\zeta)}{s_0^{(1)}(\zeta)}-\ldots-\cfrac{b_{n-1}(\zeta)}{z-\cfrac{s_1^{(n-1)}(\zeta)}{s_0^{(n-1)}(\zeta)}+\tau(z,\zeta)}}},
\end{equation}
where the parameter $\tau$ satisfies 
 \begin{equation}\label{14.2.20}
	\tau(z,\zeta)=o\left(\cfrac{s_1^{(n-1)}(\zeta)}{s_0^{(n-1)}(\zeta)}\right),
\end{equation}
the recursive sequences $\mathbf{s}^{(i)}(\zeta)=\left\{s_j^{(i)}(\zeta)\right\}_{j=0}^{2n-2i-1}$ are defined by
 \begin{equation}\label{14.2.21}
s_{j}^{(i)}(\zeta)=\cfrac{(-1)^{j+1}}{\left(s_0^{(i-1)}(\zeta)\right)^{j+3}}\begin{vmatrix}
s_1^{(i-1)}(\zeta) & s_0^{(i-1)}(\zeta) &0&\ldots& 0\\
\vdots& \ddots &\ddots&\ddots & \vdots\\
\vdots&  &\ddots&\ddots &0\\
\vdots&  & &\ddots & s_0^{(i-1)}(\zeta)\\
s_{j+2}^{(i-1)} (\zeta)& \ldots &\ldots&\ldots& s_1^{(i-1)} (\zeta)
\end{vmatrix},
\end{equation}
where $s_j^{(0)}(\zeta) =s_j(\zeta)$ and $i=\overline{1,2n-2i-1}$, $b_i$ can be calculated by
 \begin{equation}\label{14.2.21x}
 b_{0}(\zeta)=s_{0}(\zeta)\quad\mbox{and}\quad b_{i}(\zeta)=s_{0}^{(i)}(\zeta).
\end{equation}
\end{theorem}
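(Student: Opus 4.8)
The plan is to argue by induction on $n$. The base case $n=1$ is exactly Lemma~\ref{14.L.2.1}: there $\mathbf{s}=\{s_{i,j}\}_{i+j=1}=\{s_{0,0},s_{0,1},s_{1,0}\}$, the expansion \eqref{14.2.15} specializes to \eqref{14.2.7}, and the one-level continued fraction \eqref{14.2.19} is \eqref{14.2.11} rewritten in the variable $z$ instead of $z\zeta$ (with $b_0(\zeta)=s_0(\zeta)$). For the inductive step I would first use the substitution \eqref{14.2.16} to put \eqref{14.2.15} in the compact shape \eqref{14.2.17}: regarding $\zeta$ as a parameter in $\mathfrak{N}(\mathbf{s})$, the function $F(\,\cdot\,,\zeta)$ then has a Stieltjes-type asymptotic expansion of order $2n$ in $z$ whose ``moments'' $s_0(\zeta),\dots,s_{2n-1}(\zeta)$ are the rational functions of $\zeta$ given by \eqref{14.2.16}.

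Next I would peel off one level of the continued fraction using the first Schur step, Theorem~\ref{14p.int.th_ind}. Since $\mathbf{s}(\zeta)$ is simple regular on $\mathfrak{N}(\mathbf{s})$ we have $s_0(\zeta)=D_1(\zeta)\neq 0$ there, and the identity \eqref{14.int.th_2}--\eqref{14.int.th_4} --- which is a purely algebraic manipulation of the coefficients and is valid over the field of rational functions of $\zeta$ as soon as $s_0(\zeta)\not\equiv 0$ --- yields
\[
F(z,\zeta)=-\cfrac{b_0(\zeta)}{z-\cfrac{s_1(\zeta)}{s_0(\zeta)}+f_1(z,\zeta)},\qquad b_0(\zeta)=s_0(\zeta),
\]
where $f_1(z,\zeta)=-\sum_{j=0}^{2n-3}s_j^{(1)}(\zeta)z^{-j}+(\text{remainder})$ and $s_j^{(1)}(\zeta)$ is precisely the determinant \eqref{14.2.21} with $i=1$, i.e. formula \eqref{14.int.th_4} with every $s_j$ replaced by $s_j(\zeta)$. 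Putting $\mathbf{s}^{(1)}(\zeta)=\{s_j^{(1)}(\zeta)\}_{j=0}^{2(n-1)-1}$, the aim is to recognize $f_1$ as a solution of $MP(\mathbf{s}^{(1)},\mathfrak{N}(\mathbf{s}^{(1)}))$, a problem of the same kind with $n$ decreased by one. Two points are needed for this: that the remainder inherited from \eqref{14.2.17}, after the rational-in-$\zeta$ and polynomial-in-$1/z$ computation above, has exactly the two-variable form of the remainder in \eqref{14.2.15} with $n$ replaced by $n-1$; and that $\mathfrak{N}(\mathbf{s})\subseteq\mathfrak{N}(\mathbf{s}^{(1)})$, which I would deduce from the classical Sylvester/Jacobi identity relating a Hankel matrix to that of its first Schur transform, namely $D_i(\mathbf{s}^{(1)}(\zeta))=D_{i+1}(\mathbf{s}(\zeta))$ up to a nonzero power of $s_0(\zeta)$.

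Granting these, the induction hypothesis applied to $f_1$ gives the $(n-1)$-level continued fraction \eqref{14.2.19} built from $\mathbf{s}^{(1)},\dots,\mathbf{s}^{(n-1)}$, with $b_i(\zeta)=s_0^{(i)}(\zeta)$ and terminal parameter $\tau$ satisfying \eqref{14.2.20}; here one uses that, by \eqref{14.2.21}, the Schur transforms compose, $(\mathbf{s}^{(1)})^{(i-1)}(\zeta)=\mathbf{s}^{(i)}(\zeta)$, so the sequences produced by the inductive call are indeed the required $\mathbf{s}^{(2)},\dots,\mathbf{s}^{(n-1)}$. Substituting this expression for $f_1$ into the one-level formula for $F$ above produces the full $n$-level continued fraction \eqref{14.2.19} with all coefficients as claimed --- the $i=0$ data $b_0(\zeta)=s_0(\zeta)$ and $s_1(\zeta)/s_0(\zeta)$ having been fixed already at the first step --- which closes the induction.

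Apart from the routine one-variable Schur algebra, the main obstacle I expect is exactly the bookkeeping isolated above: showing that a single Schur step turns the non-symmetric two-variable remainder of \eqref{14.2.15} into a remainder of the very same shape one size down, and that the finitely many Hankel determinants in the definition of $\mathfrak{N}(\mathbf{s})$ are enough to carry out all $n$ steps. Making precise how the $o(\cdot)$ term in the joint $(z,\zeta)$-asymptotics behaves under inverting and adding the successive continued-fraction tails, together with the determinant identities that prevent the regular set from shrinking at each step, is where the argument will need genuine care.
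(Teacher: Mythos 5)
Your proposal follows essentially the same route as the paper: reduce to a one-dimensional Stieltjes-type expansion in $z$ with $\zeta$-dependent coefficients, peel off one level via Lemma~\ref{14.L.2.1} and the one-variable Schur step of Theorem~\ref{14p.int.th_ind}, and then induct, with $b_i(\zeta)=s_0^{(i)}(\zeta)$ produced by the determinant formula \eqref{14.2.21} at each stage. In fact you isolate explicitly the two bookkeeping points (the shape of the remainder after one step, and the non-degeneracy of the Hankel determinants via the Sylvester/Jacobi identity so that $\mathfrak{N}(\mathbf{s})$ suffices for all $n$ steps) that the paper's own proof passes over in silence, so your version is, if anything, the more complete one.
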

\begin{proof}
Suppose the sequence  $\mathbf{s}(\zeta)=\{s_j(\zeta)\}_{j=0}^{2n-1}$ is the associated  with $\mathbf{s}=\{s_{i,j}\}_{i,j=0}^{i+j=2n-1}$, $s_j(\zeta)$ are defined by~\eqref{14.2.16} and  $\mathbf{s}(\zeta)$ is the simple regular sequence on the set  $\mathfrak{N}(\mathbf{s})$. Let  $F$ admit  the asymptotic expansion
\[
	F(z,\zeta)=-\frac{s_0(\zeta)}{z}-\frac{s_1(\zeta)}{z^2}-\ldots--\frac{s_{2n-1}(\zeta)}{z^{2n}}+o\left(\sum\limits_{k=0}^{2n+1}\frac{1}{z^k \zeta^{n-k}}\right).
\]
By Lemma~\ref{14.L.2.1} and a similar  Schur algorithm (see \cite{DK17}, \cite{K17}), we obtain 
\[
	F(z,\zeta)=-\cfrac{b_0(\zeta)}{z-\cfrac{s_1(\zeta)}{s_0(\zeta)}+F_1(z,\zeta)}, 
\]
where $b_0(\zeta)=s_{0}(\zeta)$  and  $F_1(z,\zeta)$ admits the asymptotic
\[
	F_1(z,\zeta)=-\frac{s^{(1)}_0(\zeta)}{z}-\frac{s^{(1)}_1(\zeta)}{z^2}-\ldots--\frac{s^{(1)}_{2n-3}(\zeta)}{z^{2n-2}}+o\left(\sum\limits_{k=0}^{2n-3}\frac{s^{(1)}_k(\zeta)}{z^{k+1}}\right)
\]
and the recursive sequence $\mathbf{s}^{(1)}(\zeta)=\left\{s_j^{(1)}(\zeta)\right\}_{j=0}^{2n-3}$ is defined by~\eqref{14.2.21}.

By induction,   we get  the   representation  
\[
	F_1(z,\zeta)=-\cfrac{b_1(\zeta)}{z-\cfrac{s_1^{(1)}(\zeta)}{s_0^{(1)}(\zeta)}+F_2(z,\zeta)}, 
\]
where $b_1(\zeta)=s_{0}^{(1)}(\zeta)$  and the  function $F_2(z,\zeta)$ has the  asymptotic expansion 
\[
	F_2(z,\zeta)=-\frac{s^{(2)}_0(\zeta)}{z}-\frac{s^{(2)}_1(\zeta)}{z^2}-\ldots--\frac{s^{(2)}_{2n-5}(\zeta)}{z^{2n-4}}+o\left(\sum\limits_{k=0}^{2n-5}\frac{s^{(2)}_k(\zeta)}{z^{k+1}}\right),
\]
the recursive sequence $\mathbf{s}^{(2)}(\zeta)=\left\{s_j^{(2)}(\zeta)\right\}_{j=0}^{2n-5}$ can be calculated by~\eqref{14.2.21}.

Continuing this process $N-2$ times, we obtaine \eqref{14.2.19}--\eqref{14.2.21x}. This completes the proof.
\end{proof}

\section{Symmetric form}

Now, we study the truncated problem $MP(\mathbf{s},\mathfrak{N}(\mathbf{s}))$ and find a description of  all solutions in the symmetric form. In particular, this is more comfortable for   spherical problems.

Let $F$ be the associated function defined by~\eqref{14.2.2}. Then $F$ can be rewritten in the symmetric form as 
\begin{equation}\label{14.2.3sym}\begin{split}
	F(x, \zeta)=&-\sum_{i,j=0}^\infty{i+j\choose i} s_{i,j}\frac{1}{z^{i+1}}\frac{1}{\zeta^{j+1}}=-\\&-\cfrac{s_{0,0}}{z\zeta}-\cfrac{s_{0,1}z+2s_{1,1}+s_{1,0}\zeta}{z^2\zeta^2}-\\&-
	\cfrac{\sum\limits_{i=0}^{1}{i+2\choose i}s_{i,2}z^{2-i}+{2+2\choose 2}s_{2,2}+\sum\limits_{i=0}^{1}{2+i\choose 2}s_{2,i}\zeta^{2-i}}{z^3\zeta^3}-\cdots-\\&-
	\cfrac{\sum\limits_{i=0}^{n-1}{i+n\choose i}s_{i,n}z^{n-i}+{n+n\choose n}s_{n,n}+\sum\limits_{i=0}^{n-1}{n+i\choose n}s_{n,i}\zeta^{n-i}}{z^{n+1}\zeta^{n+1}}-\cdots
\end{split}
\end{equation}


First of all, we consider the basic problems, i.e. we construct the first step of the solutions with the minimum data of the sequence $\mathbf{s}$.  After that, we study  the general case.

\subsection{Basic 1st type}
The first basic problem can be formulated as:

Given a sequence of real numbers $\mathbf{s}=\{s_{0,0}, s_{0,1}, s_{1,0}\}$. Find  all function $F$, which admit the   asymptotic expansion
 \begin{equation}\label{14.3.1.X1}
	F(z,\zeta)=-\cfrac{s_{0,0}}{z\zeta}-\cfrac{s_{0,1}z+s_{1,0}\zeta}{z^2\zeta^2}+O\left(\cfrac{1}{z^2\zeta^2}\right).
   \end{equation}
\begin{proposition}\label{14p.sym. pro.b1} Let $\mathbf{s}=\{s_{0,0}, s_{0,1}, s_{1,0}\}$ be a  sequence of real numbers and let $s_{0,0}~\neq~0$.  Then any solution of the  problem $MP(\mathbf{s},\mathfrak{N}(\mathbf{s}))$  admits the representation
 \begin{equation}\label{14.3.1}
 	F(z,\zeta)=-\cfrac{b_0}{a_0(z,\zeta)+\tau(z,\zeta)},
 \end{equation}
 where
  \begin{equation}\label{14.3.2}
  a_0(z,\zeta)=z\zeta-\cfrac{s_{0,1}z+s_{1,0}\zeta}{s_{0,0}},\quad b_0=s_{0,0},\quad\mbox{and}\quad \tau(z,\zeta)=O\left(1\right).
   \end{equation}
\end{proposition}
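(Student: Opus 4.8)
The plan is to invert the defining asymptotic relation directly, extracting the leading block of the continued fraction exactly as in the first step of the scalar Schur algorithm (Theorem~\ref{14p.int.th_ind}) and its two-dimensional non-symmetric counterpart Lemma~\ref{14.L.2.1}. Since $s_{0,0}\neq 0$, the leading term $-s_{0,0}/(z\zeta)$ of the expansion \eqref{14.3.1.X1} is nonzero, so $F$ is nonzero for $|z|,|\zeta|$ large (for a genuine solution this is also clear from the integral representation \eqref{14.2.2} with $\mu\ne 0$). Hence we may set $b_0=s_{0,0}$, $a_0(z,\zeta)=z\zeta-(s_{0,1}z+s_{1,0}\zeta)/s_{0,0}$ and \emph{define}
\[
\tau(z,\zeta):=-\frac{b_0}{F(z,\zeta)}-a_0(z,\zeta),
\]
so that \eqref{14.3.1} holds by construction. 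Everything then reduces to proving the bound $\tau(z,\zeta)=O(1)$ claimed in \eqref{14.3.2}.

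For this I would factor the leading term out of \eqref{14.3.1.X1}, writing $F(z,\zeta)=-\dfrac{s_{0,0}}{z\zeta}\bigl(1+w(z,\zeta)\bigr)$ with
\[
w(z,\zeta)=\frac{s_{0,1}z+s_{1,0}\zeta}{s_{0,0}\,z\zeta}+O\!\left(\frac{1}{z\zeta}\right),\qquad w(z,\zeta)\to 0,
\]
and then invert using the geometric (Neumann) series $\dfrac{1}{1+w}=1-w+O(w^{2})$:
\[
-\frac{b_0}{F(z,\zeta)}=\frac{z\zeta}{1+w(z,\zeta)}=z\zeta-z\zeta\,w(z,\zeta)+z\zeta\,O\!\bigl(w(z,\zeta)^{2}\bigr)=z\zeta-\frac{s_{0,1}z+s_{1,0}\zeta}{s_{0,0}}+O(1),
\]
where one uses $z\zeta\cdot O(1/(z\zeta))=O(1)$ and, in the regime in which the $O(\cdot)$-notation of the statement is read (both $z,\zeta\to\infty$ nontangentially with $z/\zeta$ bounded away from $0$ and $\infty$), also $z\zeta\cdot O(w^{2})=O(1)$. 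Subtracting $a_0(z,\zeta)$ leaves $\tau(z,\zeta)=O(1)$, and substituting back into the definition of $\tau$ yields precisely \eqref{14.3.1}--\eqref{14.3.2}.

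The only genuinely delicate point is the two-variable asymptotic bookkeeping: one must check that the first-order correction $z\zeta\,w$ reproduces exactly the affine term $(s_{0,1}z+s_{1,0}\zeta)/s_{0,0}$ absorbed into $a_0$, and that all remaining contributions — the tail of $w$ and the quadratic-and-higher terms of the Neumann series — really collapse to $O(1)$; this is where the chosen regime for $z,\zeta\to\infty$ (the same one that makes $O(1/(z^{2}\zeta^{2}))$ in \eqref{14.3.1.X1} a single meaningful scale) is used. Once this is arranged, the argument is routine and parallels the scalar case of Theorem~\ref{14p.int.th_ind} verbatim.
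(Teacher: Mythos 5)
Your proof is correct, and it computes the same object as the paper's, but by a different mechanism. The paper's proof groups the expansion \eqref{14.3.1.X1} into the form \eqref{14.3.5} with the variable coefficients $\mathfrak{s}_0(z,\zeta)=s_{0,0}$ and $\mathfrak{s}_1(z,\zeta)=s_{0,1}z+s_{1,0}\zeta$ from \eqref{14.3.4*}, treats $z\zeta$ as a single scalar variable, and then simply \emph{cites} Theorem~\ref{14p.int.th_ind} to produce $a_0$, $b_0$ and the $O(1)$ remainder. You instead carry out the first Schur step by hand: define $\tau=-b_0/F-a_0$ and verify $\tau=O(1)$ via the Neumann-series inversion of $1+w$. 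The two routes yield identical formulas, but yours is the more honest one here: Theorem~\ref{14p.int.th_ind} is stated for constant moments, so its application to $z$- and $\zeta$-dependent ``moments'' is really a formal substitution whose justification is exactly the bookkeeping you perform — in particular your explicit observation that $z\zeta\,O(w^2)=O(1)$ requires $z/\zeta$ to stay bounded away from $0$ and $\infty$ identifies a regime assumption that the paper leaves implicit in its use of the symbol $O(1/(z^2\zeta^2))$. In short, your argument is a self-contained proof of the special case of the scalar theorem that the paper invokes, and it surfaces the one genuinely delicate point that the citation hides.
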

\begin{proof} Let $s_{0,0}\neq 0$ and $F$ admit the  asymptotic expansion
  \begin{equation}\label{14.3.3}
	F(z,\zeta)=-\cfrac{s_{0,0}}{z\zeta}-\cfrac{s_{0,1}}{z\zeta^2}-\cfrac{s_{1,0}}{z^2\zeta}+O\left(\cfrac{1}{z^2\zeta^2}\right).
	   \end{equation}
Consequently, \eqref{14.3.3} can be  rewritten as
  \begin{equation}\label{14.3.4}
	-\cfrac{s_{0,0}}{z\zeta}-\cfrac{s_{0,1}z+s_{1,0}\zeta}{z^2\zeta^2}+O\left(\cfrac{1}{z^2\zeta^2}\right).
	   \end{equation}
Setting
 \begin{equation}\label{14.3.4*}
 \mathfrak{s}_0(z,\zeta)=s_{0,0} \quad\mbox{and}\quad  \mathfrak{s}_1(z,\zeta)=s_{0,1}z+s_{1,0}\zeta,
\end{equation}
 then \eqref{14.3.4} can be represented as
\begin{equation}\label{14.3.5}
  	F(z,\zeta)-\cfrac{\mathfrak s_{0}(z,\zeta)}{z\zeta}-\cfrac{\mathfrak s_{1}(z,\zeta)}{(z\zeta)^2}+O\left(\cfrac{1}{(z\zeta)^2}\right).
\end{equation} 
 By Theorem~\ref{14p.int.th_ind}, we obtain
\begin{equation}\label{14.3.6}
-\cfrac{b_0}{a_0(z,\zeta)+\tau(z,\zeta)},
 \end{equation} 
  where the parameter $\tau(z,\zeta)=O\left(1\right)$ and  the atom $(a_0,b_0)$ is defined by
 \begin{equation}\label{14.3.7}
  b_{0}=\mathfrak s_{0}(z,\zeta)\quad\mbox{and}\quad a_{0}(z,\zeta)=z\zeta-\cfrac{\mathfrak s_{1}(z,\zeta)}{\mathfrak s_{0}(z,\zeta)}.
   \end{equation} 
Substituting~\eqref{14.3.4*} into~\eqref{14.3.6}--\eqref{14.3.7}, we obtain~\eqref{14.3.1}--\eqref{14.3.2}. This completes the proof.
~\end{proof}

\subsection{Basic 2nd type}

The second basic problem contains one value more than the first basic problem, one  can be formulated as follows:

Given a sequence of real numbers $\mathbf{s}=\{s_{0,0}, s_{0,1}, s_{1,0}, s_{1,1}\}$. Find  all function $F$, which admit the  asymptotic expansion
 \begin{equation}\label{14.3.1.X1qw}
	F(z,\zeta)=-\cfrac{s_{0,0}}{z\zeta}-\cfrac{s_{0,1}z+2s_{1,1}+s_{1,0}\zeta}{z^2\zeta^2}+o\left(\cfrac{1}{z^2\zeta^2}\right).
   \end{equation}
\begin{proposition}\label{14p.sym. pro.bw1} Let $\mathbf{s}=\{s_{0,0}, s_{0,1}, s_{1,0}, s_{1,1}\}$ be a sequence of real numbers and let $s_{0,0}\neq~0$.  Then any solution of the problem $MP(\mathbf{s},\mathfrak{N}(\mathbf{s}))$  admits the following representation
 \begin{equation}\label{14.3.9}
 	F(z,\zeta)=-\cfrac{b_0}{a_0(z,\zeta)+\tau(z,\zeta)},
 \end{equation}
 where
  \begin{equation}\label{14.3.10}
  a_0(z,\zeta)=z\zeta-\cfrac{s_{0,1}z+2s_{1,1}+s_{1,0}\zeta}{s_{0,0}},\quad b_0=s_{0,0}\quad\mbox{and}\quad \tau(z,\zeta)=o\left(1\right).
   \end{equation}
\end{proposition}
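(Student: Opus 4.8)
The plan is to mirror the proof of Proposition~\ref{14p.sym. pro.b1} essentially verbatim, the only new feature being the extra datum $s_{1,1}$, which enters the coefficient of $(z\zeta)^{-2}$ but does not change the structure of the argument. First I would observe that the asymptotic expansion \eqref{14.3.1.X1qw} can be regrouped by powers of the single variable $w:=z\zeta$: the leading term is $-s_{0,0}/(z\zeta)$ and the next term is $-(s_{0,1}z+2s_{1,1}+s_{1,0}\zeta)/(z\zeta)^2$, with a remainder that is now $o((z\zeta)^{-2})$ rather than $O((z\zeta)^{-2})$ because the sequence carries one more moment. Concretely, I would set
\begin{equation*}
\mathfrak{s}_0(z,\zeta)=s_{0,0}\quad\mbox{and}\quad \mathfrak{s}_1(z,\zeta)=s_{0,1}z+2s_{1,1}+s_{1,0}\zeta,
\end{equation*}
so that \eqref{14.3.1.X1qw} takes the form
\begin{equation*}
F(z,\zeta)=-\cfrac{\mathfrak{s}_0(z,\zeta)}{z\zeta}-\cfrac{\mathfrak{s}_1(z,\zeta)}{(z\zeta)^2}+o\left(\cfrac{1}{(z\zeta)^2}\right).
\end{equation*}

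Next I would treat $z\zeta$ as the single spectral variable and invoke Theorem~\ref{14p.int.th_ind} with $\ell=1$: since $\mathfrak{s}_0(z,\zeta)=s_{0,0}\neq 0$ by hypothesis, the one-step Schur reduction \eqref{14.int.th_2}--\eqref{14.int.th_3} applies and yields
\begin{equation*}
F(z,\zeta)=-\cfrac{b_0}{a_0(z,\zeta)+\tau(z,\zeta)},\qquad b_0=\mathfrak{s}_0(z,\zeta),\quad a_0(z,\zeta)=z\zeta-\cfrac{\mathfrak{s}_1(z,\zeta)}{\mathfrak{s}_0(z,\zeta)}.
\end{equation*}
Substituting back the expressions for $\mathfrak{s}_0$ and $\mathfrak{s}_1$ gives exactly \eqref{14.3.9}--\eqref{14.3.10}. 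The parameter bound $\tau(z,\zeta)=o(1)$ comes from the fact that the remainder in the regrouped expansion is $o((z\zeta)^{-2})$, whereas in Proposition~\ref{14p.sym. pro.b1} the weaker remainder $O((z\zeta)^{-2})$ only gave $\tau=O(1)$; this is the single quantitative difference between the two proofs.

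The only point requiring a little care is the justification that the $o$-remainder in the two-variable expansion \eqref{14.3.1.X1qw} genuinely becomes an $o((z\zeta)^{-2})$-remainder after the regrouping, since the nontangential limit $z\widehat{\to}\infty$ underlying Theorem~\ref{14p.int.th_ind} must be interpreted for the composite variable $z\zeta$; I would note that this is exactly the convention already used in the symmetric expansion \eqref{14.2.3sym} and in Proposition~\ref{14p.sym. pro.b1}, so no new analytic input is needed. Hence the main (and essentially only) obstacle is bookkeeping: making sure the coefficient $s_{0,1}z+2s_{1,1}+s_{1,0}\zeta$ is carried correctly through the substitution into \eqref{14.3.7}. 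With that in hand the proof closes immediately.
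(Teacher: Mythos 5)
Your proposal is correct and follows essentially the same route as the paper: regroup the expansion \eqref{14.3.1.X1qw} into the form $-\mathfrak{s}_0(z,\zeta)/(z\zeta)-\mathfrak{s}_1(z,\zeta)/(z\zeta)^2+o\bigl((z\zeta)^{-2}\bigr)$ with $\mathfrak{s}_0=s_{0,0}$ and $\mathfrak{s}_1=s_{0,1}z+2s_{1,1}+s_{1,0}\zeta$, apply the one-step Schur reduction of Theorem~\ref{14p.int.th_ind} in the variable $z\zeta$, and substitute back. Your observation that the extra moment $s_{1,1}$ upgrades the remainder from $O(1)$ to $o(1)$ for $\tau$ matches the paper's treatment exactly.
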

\begin{proof} Assume $s_{0,0}\neq 0$ and $F$ admits the asymptotic expansion
  \begin{equation}\label{14.3.11}
	F(z,\zeta)=-\cfrac{s_{0,0}}{z\zeta}-\cfrac{s_{0,1}}{z\zeta^2}-\cfrac{s_{1,0}}{z^2\zeta}+\cfrac{2s_{1,1}}{z^2\zeta^2}+o\left(\cfrac{1}{z^2\zeta^2}\right).
	   \end{equation}
Consequently, \eqref{14.3.11}  can be rewritten as
  \begin{equation}\label{14.3.12}
	F(z,\zeta)=-\cfrac{s_{0,0}}{z\zeta}-\cfrac{s_{0,1}z+2s_{1,1}+s_{1,0}\zeta}{z^2\zeta^2}+o\left(\cfrac{1}{z^2\zeta^2}\right).
	   \end{equation}
Setting
 \begin{equation}\label{14.3.13}
 \mathfrak{s}_0(z,\zeta)=s_{0,0}\quad\mbox{and}\quad \mathfrak{s}_1(z,\zeta)=s_{0,1}z+2s_{1,1}+s_{1,0}\zeta,
\end{equation}
 then \eqref{14.3.12} can be represented as
\begin{equation}\label{14.3.14}
  	F(z,\zeta)=-\cfrac{\mathfrak s_{0}(z,\zeta)}{z\zeta}-\cfrac{\mathfrak s_{1}(z,\zeta)}{z^2\zeta^2}+o\left(\cfrac{1}{z^2\zeta^2}\right).
\end{equation} 
 Again, by Theorem~\ref{14p.int.th_ind}, we obtain
\begin{equation}\label{14.3.15}
	F(z,\zeta)=-\cfrac{b_0}{a_0(z,\zeta)+\tau(z,\zeta)},
 \end{equation} 
  where the parameter $\tau(z,\zeta)=o(1)$ and the  atom $(a_0,b_0)$ is defined by
 \begin{equation}\label{14.3.16}
  b_{0}=\mathfrak s_{0}(z,\zeta)\quad\mbox{and}\quad a_{0}(z,\zeta)=z\zeta-\cfrac{\mathfrak s_{1}(z,\zeta)}{\mathfrak s_{0}(z,\zeta)}.
   \end{equation} 
Substituting~\eqref{14.3.13} into~\eqref{14.3.15}--\eqref{14.3.16}, we obtain~\eqref{14.3.9}--\eqref{14.3.10}. This completes the proof.
~\end{proof}

\subsection{General case} 

\textbf{Truncated problem }  $MP(\mathbf{s}, \mathfrak{N}(\mathbf{s}))$:
Given $n\in\mathbb{N}$ and a  sequence of real numbers $\mathbf{s}=\{s_{i,j}\}_{i,j=0}^{2n-1}$, describe the set $\mathcal{M}({\mathbf s})$  of  functions $F$, which satisfy the asymptotic expansion
\begin{equation}\label{14.3.17}\begin{split}
	F(x, \zeta)=&-\cfrac{s_{0,0}}{z\zeta}-\cfrac{s_{0,1}z+2s_{1,1}+s_{1,0}\zeta}{z^2\zeta^2}-\\&-
	\cfrac{\sum\limits_{i=0}^{1}{i+2\choose i}(s_{i,2}z^{2-i}+s_{2,i}\zeta^{2-i})+{2+2\choose 2}s_{2,2}}{z^3\zeta^3}-\cdots-\\&-
	\cfrac{\sum\limits_{i=0}^{2n-2}{i+2n\choose i}(s_{i,2n}z^{2n-1-i}+s_{2n,i}\zeta^{2n-1-i})+{4n-2\choose 2n-1}s_{2n-1,2n-1}}{z^{2n}\zeta^{2n}}\,+\\&+o\left(\cfrac{1}{z^{2n}\zeta^{2n}}\right).\end{split}
\end{equation}
Setting
\begin{equation}\label{14p.3.24}\begin{split}
	&\mathfrak s_j^{(0)}(z,\zeta)=\sum\limits_{i=0}^{j-1}{i+j\choose i}\left(s_{i,j}z^{j-i}+s_{j,i}\zeta^{j-i}\right)+{2j\choose j}s_{j,j},
\\&\mathfrak s_0^{(0)}(z,\zeta)=s_{0,0},\quad 
	j=\overline{1,2n-1},
	\end{split}
\end{equation}
we obtain the new sequence  $\mathfrak {s}^{(0)}(z,\zeta)=\{\mathfrak {s}^{(0)}_{j}(z,\zeta)\}_{j=0}^{2n-1}$  associated with  the sequence   $\mathbf{s}$ and $\mathfrak {s}^{(0)}(z,\zeta)$ is the simple regular sequence on the set $\mathfrak{N}(\mathbf{s})$. Moreover, \eqref{14.3.17} can be rewritten as
\begin{equation}\label{14p.3.27}
 	F(z,\zeta)=-\cfrac{\mathfrak{s}_0^{(0)} (z,\zeta)}{z\zeta}-\cfrac{\mathfrak{s}_1^{(0)}(z,\zeta) }{z^2\zeta^2 }-\cdots--\cfrac{\mathfrak{s}^{(0)}_{2n-1} (z,\zeta)}{z^{2n}\zeta^{2n} }+O\left(\cfrac{1}{z^{2n}\zeta^{2n} }\right).
\end{equation}

\begin{theorem}\label{14p.sym. pro.th1} Let  $\mathbf{s}=\{s_{i,j}\}_{i,j=0}^{2n-1}$ be a sequence of real numbers and let $\mathfrak {s}(z,\zeta)=\{\mathfrak {s}_{j}(z,\zeta)\}_{j=0}^{2n-1}$ be an associated sequence of $\mathbf{s}$, which is defined by~\eqref{14p.3.24} and $\mathfrak {s}(z,\zeta)$ is a simple regular sequence on the set $\mathfrak{N}(\mathbf{s})$. Then any solution of the truncated problem $MP(\mathbf{s}, \mathfrak{N}(\mathbf{s}))$  admits the   representation
\begin{equation}\label{14p.3.21}
	F(z,\zeta)=-\cfrac{b_0}{a_0(z, \zeta)-\cfrac{b_1(z, \zeta)}{a_1(z, \zeta)-\cdots-\cfrac{b_{n-1}(z, \zeta)}{a_{n-1}(z, \zeta)+\tau(z, \zeta)}}}, 
\end{equation}
where the parameter $\tau$ satisfies the condition
\begin{equation}\label{14p.3.22}
	\tau(z,\zeta)=o\left(\cfrac{\mathfrak s^{(n-1)}_1(z,\zeta)}{\mathfrak s_{0}^{(n-1)}(z,\zeta)}\right), 
\end{equation}
the atoms $(a_j,b_j)$ are defined by
\begin{equation}\label{14p.3.23}\begin{split} &
	a_{0}(z,\zeta)=z\zeta-\cfrac{\mathfrak s_1^{(0)}(z,\zeta)}{\mathfrak s_0^{(0)}(z,\zeta)},\quad b_0=\mathfrak s_0^{(0)}(z,\zeta),\\&a_j(z,\zeta)=z\zeta-\cfrac{\mathfrak s^{(j)}_1(z,\zeta)}{\mathfrak s_{0}^{(j)}(z,\zeta)}, \quad b_j(z,\zeta)=\mathfrak s_{0}^{(j)}(z,\zeta),\quad j=\overline{0,n-1},
	\end{split}
\end{equation}
the recursive sequences ${\bf\mathfrak s}^{(j)}(z,\zeta)=\left\{\mathfrak s^{(j)}_i(z,\zeta)\right\}_{i=0}^{2n-1-2j}$ are defined for all $i=\overline{0,2n-1-2j}$ 
 \begin{equation}\label{14p.3.25}
\mathfrak s_{i}^{(j)}\!(z,\zeta)\!=\!\cfrac{(-1)^{i+1}}{\left(\mathfrak s_0^{(j-1)}(z,\zeta)\right)^{i+3}}\!\!\begin{vmatrix}
\mathfrak s_1^{(j-1)}\!(z,\zeta) & \mathfrak s_0^{(j-1)} \!(z,\zeta)&0&\ldots& 0\\
\vdots& \ddots &\ddots&\ddots & \vdots\\
\vdots&  &\ddots&\ddots &0\\
\vdots&  & &\ddots & \mathfrak s_0^{(j-1)}\!(z,\zeta) \\
\mathfrak s_{i+2}^{(i-1)} \!(z,\zeta)& \ldots &\ldots&\ldots& \mathfrak s_1^{(j-1)} \!(z,\zeta)
\end{vmatrix}.
\end{equation}
\end{theorem}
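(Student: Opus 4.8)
The plan is to mirror the structure of the proof of Theorem~\ref{14.2.th.2.1}, but now carrying the \emph{symmetric} auxiliary coefficients $\mathfrak s_j^{(0)}(z,\zeta)$ as the ``moments'' fed into the one-dimensional Schur machinery in the variable $z\zeta$. First I would observe that, under the hypothesis that $\mathfrak s(z,\zeta)$ is a simple regular sequence on $\mathfrak N(\mathbf s)$, the representation \eqref{14p.3.27} exhibits $F$ as a function admitting, in the single variable $w=z\zeta$, the asymptotic expansion $F=-\sum_{j=0}^{2n-1}\mathfrak s_j^{(0)}(z,\zeta) w^{-(j+1)} + O(w^{-2n})$ with coefficients that happen to depend polynomially on $z$ and $\zeta$. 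This is exactly the shape to which Theorem~\ref{14p.int.th_ind} (and its iteration) applies, since that theorem only uses $s_0\neq 0$ and the determinantal recursion; it does not care that the $s_j$ are numbers rather than rational functions of parameters.

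Second, I would run the base step: apply Proposition~\ref{14p.sym. pro.bw1} (or directly Theorem~\ref{14p.int.th_ind} with $s_j\rightsquigarrow \mathfrak s_j^{(0)}(z,\zeta)$ and $z\rightsquigarrow z\zeta$) to peel off the first atom, obtaining
\[
  F(z,\zeta)=-\cfrac{b_0}{a_0(z,\zeta)+F_1(z,\zeta)},\qquad b_0=\mathfrak s_0^{(0)}(z,\zeta),\quad a_0(z,\zeta)=z\zeta-\cfrac{\mathfrak s_1^{(0)}(z,\zeta)}{\mathfrak s_0^{(0)}(z,\zeta)},
\]
where $F_1$ inherits an asymptotic expansion in $w=z\zeta$ whose coefficients are precisely the $\mathfrak s_i^{(1)}(z,\zeta)$ given by the determinant formula \eqref{14p.3.25} with $j=1$; this is the content of formula \eqref{14.int.th_4} transplanted to the present coefficients. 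The nonvanishing $D_1,\dots$ needed to continue is guaranteed by the simple-regularity assumption on $\mathfrak N(\mathbf s)$, exactly as in the proof of Theorem~\ref{14.2.th.2.1}.

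Third, I would argue by induction on the number of extracted atoms: assuming $F_k$ has the expansion with coefficients $\mathfrak s_i^{(k)}(z,\zeta)$, apply the same one-dimensional step to get $F_k=-b_k/(a_k+F_{k+1})$ with $b_k=\mathfrak s_0^{(k)}$, $a_k=z\zeta-\mathfrak s_1^{(k)}/\mathfrak s_0^{(k)}$, and with $F_{k+1}$ carrying the coefficients $\mathfrak s_i^{(k+1)}$ from \eqref{14p.3.25}. After $n$ steps the truncation order $2n-1$ is exhausted: the remaining term $F_n$ is the parameter $\tau$, and tracking the order of the error through the $n$ continued-fraction stages yields the bound \eqref{14p.3.22}, $\tau(z,\zeta)=o\!\left(\mathfrak s_1^{(n-1)}(z,\zeta)/\mathfrak s_0^{(n-1)}(z,\zeta)\right)$, just as \eqref{14.2.12} and \eqref{14.2.20} were obtained. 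Composing the $n$ stages gives \eqref{14p.3.21}--\eqref{14p.3.23}.

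The main obstacle I anticipate is bookkeeping rather than a conceptual gap: one must check that the symmetric grouping \eqref{14p.3.24} is genuinely compatible with reading \eqref{14.2.3sym} as a one-variable expansion in $w=z\zeta$ — i.e.\ that the $o$-term in \eqref{14.3.17}, which is stated as $o(z^{-2n}\zeta^{-2n})$, indeed behaves like $o(w^{-2n})$ nontangentially, and that at each stage the new error term $F_{k+1}$ does not pick up spurious lower-order contributions in $z$ or $\zeta$ separately that would violate the form needed to apply Theorem~\ref{14p.int.th_ind} again. I would handle this by verifying the claim for $n=1$ and $n=2$ (which are exactly Propositions~\ref{14p.sym. pro.b1} and \ref{14p.sym. pro.bw1}), and then noting that the determinant recursion \eqref{14p.3.25} is a purely formal consequence of \eqref{14.int.th_4}, so the inductive step is identical to the one already carried out in the proof of Theorem~\ref{14.2.th.2.1} with $s_j(\zeta)$ replaced by $\mathfrak s_j^{(0)}(z,\zeta)$ and the scalar variable $z$ replaced by $z\zeta$.
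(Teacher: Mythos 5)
Your proposal follows essentially the same route as the paper's own proof: treat $w=z\zeta$ as a single variable, apply the one-dimensional Schur step of Theorem~\ref{14p.int.th_ind} to the expansion \eqref{14p.3.27} to peel off the atom $(a_0,b_0)$, pass to $F_1$ with coefficients given by the determinant recursion \eqref{14p.3.25}, and conclude by induction, with simple regularity guaranteeing each step is admissible. Your additional attention to the behaviour of the $o(z^{-2n}\zeta^{-2n})$ error term under this reduction is a point the paper itself does not elaborate, but it does not change the argument.
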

\begin{proof}
Let   $\mathbf{s}=\{s_{i,j}\}_{i,j=0}^{2n-1}$ be a sequence of real numbers and let $\mathfrak{s}^{(0)}(z,\zeta)=\{\mathfrak{s}^{(0)}_{i}(z,\zeta)\}_{i=0}^{2n-1}$ be  associated sequence of $\mathbf{s}$, which is defined by \eqref{14p.3.24} and  the simple regular on the set  $\mathfrak{N}(\mathbf{s})$.
Suppose the associated function  F admits the asymptotic expansion~\eqref{14.3.17}. 

By Theorem~\ref{14p.int.th_ind}, the associated function $F$ can be represented by
\begin{equation}\label{14p.3.28}
	F(z,\zeta)=-\cfrac{b_0}{a_0(z,\zeta)+F_{1}(z,\zeta)}, 
\end{equation}
where the atom $(a_0, b_0)$ can be calculated by
\[
	a_{0}(z,\zeta)=z\zeta-\cfrac{\mathfrak s_1^{(0)}(z,\zeta)}{\mathfrak s_0^{(0)}(z,\zeta)}\quad\mbox{and}\quad b_0=\mathfrak s_0^{(0)}(z,\zeta).
\]
We associate a  function $F_1$ with the sequence $\mathfrak{s}^{(1)}(z,\zeta)=\left\{\mathfrak{s}^{(1)}_{i}(z,\zeta)\right\}_{i=0}^{2n-3}$ defined by \eqref{14p.3.25} and obtain
\begin{equation}\label{14p.3.27&*}
 	F_1(z,\zeta)=-\cfrac{\mathfrak{s}_0^{(1)} (z,\zeta)}{z\zeta}-\cdots-\cfrac{\mathfrak{s}^{(1)}_{2n-3} (z,\zeta)}{z^{2n-2}\zeta^{2n-2} }+o\left(\sum_{k=0}^{2n-3}\cfrac{\mathfrak{s}^{(1)}_{k} (z,\zeta)}{z^{k+1}\zeta^{k+1} }\right).
\end{equation}

By induction on $j$, we obtain that  any solution of the problem $MP(\mathbf{s}, \mathfrak{N}(\mathbf{s}))$  takes the representation~\eqref{14p.3.21}--\eqref{14p.3.25}. This completes the proof.
~\end{proof}

 
\subsection{The $n$-th convergent}
 Now, we study the continued fraction of several variables  and its $n$-th convergent. Let the continued fraction be defined by 
 \begin{equation}\label{14p.4*.1}
	-\cfrac{b_0(z_1,\ldots,z_k)}{a_0(z_1,\ldots,z_k)-\cfrac{b_1(z_1,\ldots,z_k)}{a_1(z_1,\ldots,z_k)-\ddots}}.
\end{equation}

Moreover, let  us define a linear fractional transformation $T_j$ by 
 \begin{equation}\label{14p.4*.2}
	T_j(\omega)=-\cfrac{b_j(z_1,\ldots,z_k)}{a_j(z_1,\ldots,z_k)+\omega(z_1,\ldots,z_k)}, \quad j\in\mathbb{Z}.
\end{equation}

Similar to \cite{Wall}, the $n$--th convergent of the fraction \eqref{14p.4*.1} can be represented by 
 \begin{equation}\label{14p.4*.3}
 	\cfrac{Q_n(z_1,\ldots,z_k)}{P_n(z_1,\ldots,z_k)}=T_0\circ T_1\circ \ldots \circ T_n(0),
 \end{equation}
where the numerator  $Q_n$ and denominator $P_n$ are the solutions of the system 
 \begin{equation}\label{14p.4*.4}
	y_{j+1}=a_j(z_1,\ldots,z_k)y_j-b_j(z_1,\ldots,z_k)y_{j-1}
 \end{equation}
 subject  to the initial conditions 
  \begin{equation}\label{14p.4*.5}\begin{split}&
  P_{-1}(z_1,\ldots,z_k)\equiv 0\quad\mbox{and}\quad P_0(z_1,\ldots,z_k)\equiv 1,\\&
  Q_{-1}(z_1,\ldots,z_k)\equiv 1\quad\mbox{and}\quad Q_0(z_1,\ldots,z_k)\equiv 0.
  \end{split} \end{equation}
\begin{proposition}\label{14p.sym. prop4.1*} Let $\mathbf{s}=\{s_{i,j}\}_{i,j=0}^{2n-1}$ be a sequence of real numbers and let $\mathfrak {s}(z,\zeta)=\{\mathfrak {s}_{j}(z,\zeta)\}_{j=0}^{2n-1}$ be an associated sequence of $\mathbf{s}$, which is defined by~\eqref{14p.3.24} and $\mathfrak {s}$ is  a simple regular sequence on the set $\mathfrak{N}(\mathbf{s})$. Then any solution of the truncated problem $MP(\mathbf{s}, \mathfrak{N}(\mathbf{s}))$  admits the representation
  \begin{equation}\label{14p.4*.6}
  	F(z,\zeta)=\cfrac{Q_{n-2}(z,\zeta)\tau(z,\zeta)+Q_{n-1}(z,\zeta)}{P_{n-2}(z,\zeta)\tau(z,\zeta)+P_{n-1}(z,\zeta)},
 \end{equation}
where the parameter $\tau$ satisfies \eqref{14p.3.22}, $P_j$ and $Q_j$ are the solutions of the   system 
 \begin{equation}\label{14p.4*.7}
	y_{j+1}=a_j(z,\zeta)y_j-b_j(z,\zeta)y_{j-1}
 \end{equation}
 subject  to the  initial conditions 
  \begin{equation}\label{14p.4*.8}
  P_{-1}(z,\zeta)\equiv 0\quad,P_0(z,\zeta)\equiv 1,\quad
  Q_{-1}(z,\zeta)\equiv 1\quad\mbox{and}\quad Q_0(z,\zeta)\equiv 0,
\end{equation}
and the atoms $(a_j, b_j)$ are defined by \eqref{14p.3.23}.
\end{proposition}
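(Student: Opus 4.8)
The plan is to combine Theorem~\ref{14p.sym. pro.th1} with the classical theory of continued-fraction convergents, as recalled in equations~\eqref{14p.4*.1}--\eqref{14p.4*.5}. By Theorem~\ref{14p.sym. pro.th1}, any solution $F$ of $MP(\mathbf{s},\mathfrak{N}(\mathbf{s}))$ can be written as the finite continued fraction~\eqref{14p.3.21} with the last denominator perturbed by the parameter $\tau$, where $\tau$ satisfies~\eqref{14p.3.22} and the atoms $(a_j,b_j)$ are those of~\eqref{14p.3.23}. Concretely, this means
\[
	F(z,\zeta)=T_0\circ T_1\circ\cdots\circ T_{n-1}\bigl(\tau(z,\zeta)\bigr),
\]
where each $T_j$ is the linear fractional transformation~\eqref{14p.4*.2} built from the atom $(a_j,b_j)$. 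So the task reduces to evaluating this composition of Möbius maps at the point $\tau$ rather than at $0$.

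The key step is the standard identity expressing a composition of linear fractional transformations through the numerator and denominator polynomials $Q_j,P_j$ defined by the three-term recurrence~\eqref{14p.4*.7} with initial data~\eqref{14p.4*.8}. First I would recall (or prove by a short induction on $j$, exactly as in~\cite{Wall}) that
\[
	T_0\circ T_1\circ\cdots\circ T_{j}(\omega)=\cfrac{Q_{j-1}(z,\zeta)\,\omega+Q_{j}(z,\zeta)}{P_{j-1}(z,\zeta)\,\omega+P_{j}(z,\zeta)},
\]
which holds for every admissible argument $\omega$; setting $\omega=0$ recovers~\eqref{14p.4*.3}, and this is the content one needs. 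The inductive step uses $T_{j+1}(\omega)=-b_{j+1}/(a_{j+1}+\omega)$ together with the recurrences $P_{j+1}=a_{j+1}P_j-b_{j+1}P_{j-1}$ and $Q_{j+1}=a_{j+1}Q_j-b_{j+1}Q_{j-1}$, and is a routine algebraic manipulation that I would not write out in full.

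Applying this identity with $j=n-1$ and $\omega=\tau(z,\zeta)$ gives precisely~\eqref{14p.4*.6}, namely
\[
	F(z,\zeta)=\cfrac{Q_{n-2}(z,\zeta)\,\tau(z,\zeta)+Q_{n-1}(z,\zeta)}{P_{n-2}(z,\zeta)\,\tau(z,\zeta)+P_{n-1}(z,\zeta)},
\]
with $\tau$ subject to~\eqref{14p.3.22} and the atoms $(a_j,b_j)$ given by~\eqref{14p.3.23}. Since Theorem~\ref{14p.sym. pro.th1} already guarantees that every solution has the form~\eqref{14p.3.21}, and the Möbius-composition identity is an exact equality of rational functions in $z,\zeta$ (valid wherever the denominators do not vanish, in particular on $\mathfrak{N}(\mathbf{s})$), no further estimates are required. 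The only point demanding any care is bookkeeping of the index shift: the continued fraction~\eqref{14p.3.21} has $n$ atoms $(a_0,b_0),\dots,(a_{n-1},b_{n-1})$, so the composition stops at $T_{n-1}$ and the convergent polynomials appearing are $P_{n-2},P_{n-1}$ and $Q_{n-2},Q_{n-1}$; matching this against the initial conditions~\eqref{14p.4*.8} so that the degenerate cases $n=1,2$ come out correctly is the main thing to verify. This completes the proof.
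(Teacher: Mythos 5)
Your proof is correct and follows essentially the same route as the paper: invoke Theorem~\ref{14p.sym. pro.th1}, realize the continued fraction \eqref{14p.3.21} as the composition $T_0\circ\cdots\circ T_{n-1}(\tau)$ of the Möbius maps \eqref{14p.4*.2}, and apply the standard convergent identity for such compositions. The one caveat --- which you flag but do not resolve, and which is inherited from the paper's own normalization in \eqref{14p.4*.3} rather than introduced by you --- is that with the initial conditions \eqref{14p.4*.8} the composition identity actually reads $T_0\circ\cdots\circ T_j(\omega)=\bigl(Q_j\omega+Q_{j+1}\bigr)/\bigl(P_j\omega+P_{j+1}\bigr)$ (test $j=0$: your version would give $T_0(\omega)=\omega$), so the indices in \eqref{14p.4*.6} should come out as $n-1,\,n$ rather than $n-2,\,n-1$; this off-by-one lives in the statement itself, not in the structure of your argument.
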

\begin{proof}
  By Theorem~\ref{14p.sym. pro.th1}, any solution $F$ of the problem $MP(\mathbf{s}, \mathfrak{N}(\mathbf{s}))$ can be represented by \eqref{14p.3.21}--\eqref{14p.3.25}. According to \eqref{14p.4*.2} and \eqref{14p.4*.3}, we  define  the  linear fractional transformation $T_j$ by 
  \begin{equation}\label{14p.4*.9}
  	T_j(\tau)=-\cfrac{b_j(z,\zeta)}{a_j(z,\zeta)+\tau(z,\zeta)}.
  \end{equation}
  Hence, the representation \eqref{14p.3.21} of $F$ can be rewritten as 
    \begin{equation}\label{14p.4*.10}
    	F(z,\zeta)=T_0\circ T_1\circ \ldots \circ T_{n-1}(\tau)=\cfrac{Q_{n-2}(z,\zeta)\tau(z,\zeta)+Q_{n-1}(z,\zeta)}{P_{n-2}(z,\zeta)\tau(z,\zeta)+P_{n-1}(z,\zeta)},
      \end{equation}
 where  $\tau$   satisfies \eqref{14p.3.22}, $P_j$ and $Q_j$ are defined by \eqref{14p.4*.7}--\eqref{14p.4*.8}.  This completes the proof.~\end{proof}
\section{Stieltjes like case}
In this section, we study the truncared two-dimensional moment problem and find description of all solutions in terms of Stieltjes like fractions.

\begin{definition} Let $\mathbf{s}=\{s_j(z_1,\ldots, z_n)\}_{j=0}^{\ell}$ be a sequence such that $s_j(z_1,\ldots, z_n)$ depend of the variables $z_1,\ldots, z_n$. The set $\mathfrak{N}^+(\mathbf{s})$ is called a simple  plus regular set of the sequence $\mathbf{s}$ and defined by
\begin{equation}\label{14.7.1}
	\mathfrak{N}^+(\mathbf{s})=\{(z_1,\ldots, z_n) |\, D_i(z_1,\ldots, z_n) \neq 0\mbox{ and } D^+_i(z_1,\ldots, z_n) \neq 0,\quad i=\overline{0,[\ell/2]}  \},
\end{equation}
where $D_i$ are defined by \eqref{14.2.18} and
\begin{equation}\label{14.7.2}
	D^+_i(z_1,\ldots, z_n) =\begin{vmatrix}
s_1(z_1,\ldots, z_n)& \ldots& s_{i}(z_1,\ldots, z_n)\\
  \vdots &\ddots &\vdots \\
s_{i}(z_1,\ldots, z_n) & \ldots  & s_{2i-1}(z_1,\ldots, z_n)
\end{vmatrix}.
\end{equation}

In this case,  $\mathbf{s}=\{s_j(z_1,\ldots, z_n)\}_{j=0}^{\ell}$ is called a  simple  plus regular sequence on the set  $\mathfrak{N}^+(\mathbf{s})$.
\end{definition}
\subsection{Basic problem} First of all, we study the basic problem $MP(\mathbf{s}, \mathfrak{N}^+(\mathbf{s}))$ formulated as:
Given the sequence of real number  $\mathbf{s}=\{s_{0,0}, s_{0,1},s_{1,0}, s_{1,1}\}$, such that the associated sequence $\mathfrak {s}(z,\zeta)=\{\mathfrak {s}_{0}(z,\zeta), \mathfrak {s}_{1}(z,\zeta)\}$ is a simple plus regular on the set $\mathfrak{N}^{+}(\mathbf{s})$ and 
\begin{equation}\label{14p.7.1.i1}
	 \mathfrak {s}_{0}(z,\zeta)=s_{0,0}\quad\mbox{and}\quad s_1(z,\zeta)=s_{0,1}z+2s_{1,1}+s_{1,0}\zeta.
\end{equation}
 Find  all functions $F$, which admit the asymptotic expansion
 \begin{equation}\label{14.3.1.tX1}\begin{split}
	F(z,\zeta)&=-\cfrac{s_{0,0}}{z\zeta}-\cfrac{s_{0,1}z+2s_{1,1}+s_{1,0}\zeta}{z^2\zeta^2}+o\left(\cfrac{1}{z^2\zeta^2}\right)=\\&=
	-\cfrac{ \mathfrak {s}_{0}(z,\zeta)}{z\zeta}-\cfrac{ \mathfrak {s}_{1}(z,\zeta)}{z^2\zeta^2}+o\left(\cfrac{1}{z^2\zeta^2}\right).
  \end{split} \end{equation}

\begin{lemma}\label{14p.lem7.1} Let  $\mathbf{s}=\{s_{0,0}, s_{0,1},s_{1,0}, s_{1,1}\}$  be a sequence of real numbers and let $\mathfrak {s}(z,\zeta)=\{\mathfrak {s}_{0}(z,\zeta), \mathfrak {s}_{1}(z,\zeta)\}$ be an associated sequence of $\mathbf{s}$, which is defined by \eqref{14p.7.1.i1}and $\mathfrak {s}(z,\zeta)$ is a  simple plus regular sequence on the set $\mathfrak{N}^{+}(\mathbf{s})$. Then any solution of the  basic problem $MP(\mathbf{s}, \mathfrak{N}^+(\mathbf{s}))$  takes the  form
\begin{equation}\label{14p.7.3r1}
	F(z,\zeta)=\cfrac{1}{-z\zeta m_1(z,\zeta)+\cfrac{1}{l_1(z,\zeta)+\tau(z,\zeta)}},
\end{equation}
where the parameter $\tau$ satisfies the condition
\begin{equation}\label{14p.7.4r.2}
	\tau(z,\zeta)=o\left( l_1(z,\zeta)\right)
\end{equation}
and the atom $(m_1,l_1)$ can be calculated by 
\begin{equation}\label{14p.7.4r.3}
	m_1(z,\zeta)=\cfrac{1}{s_{0,0}}\quad \mbox{and}\quad l_1(z,\zeta)=\cfrac{s_{0,0}^2}{s_{0,1}z+2s_{1,1}+s_{1,0}\zeta}.
\end{equation}
\end{lemma}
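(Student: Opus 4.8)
The plan is to perform two elementary reciprocations, which together amount to the first step of the one-dimensional Stieltjes ($S$-fraction) algorithm applied to $F$ regarded as a function of the single variable $w=z\zeta$, the role of the first two moments being played by $\mathfrak{s}_0(z,\zeta)=s_{0,0}$ and $\mathfrak{s}_1(z,\zeta)=s_{0,1}z+2s_{1,1}+s_{1,0}\zeta$. First I would note that on $\mathfrak{N}^+(\mathbf{s})$ the simple plus regularity forces $\mathfrak{s}_0\equiv s_{0,0}\neq0$ and $\mathfrak{s}_1(z,\zeta)\neq0$, while the leading term $-\mathfrak{s}_0/(z\zeta)$ in \eqref{14.3.1.tX1} shows $F(z,\zeta)\neq0$ for $z\widehat{\rightarrow}\infty$, so that $1/F$ is well defined in that regime. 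Rewriting \eqref{14.3.1.tX1} as $F=-\mathfrak{s}_0/(z\zeta)-\mathfrak{s}_1/(z\zeta)^2+o\big((z\zeta)^{-2}\big)$ and expanding the reciprocal via $(1+u)^{-1}=1-u+o(u)$ with $u=\mathfrak{s}_1/(\mathfrak{s}_0 z\zeta)+o\big((z\zeta)^{-1}\big)$ gives
\[
	\frac{1}{F(z,\zeta)}=-\frac{z\zeta}{\mathfrak{s}_0}+\frac{\mathfrak{s}_1}{\mathfrak{s}_0^2}+o(1).
\]
Setting $m_1=1/\mathfrak{s}_0=1/s_{0,0}$ and $G(z,\zeta):=1/F(z,\zeta)+z\zeta\, m_1$, this reads $G=\mathfrak{s}_1/\mathfrak{s}_0^2+o(1)$ and $F=\big(-z\zeta\, m_1+G\big)^{-1}$.

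For the second reciprocation, $\mathfrak{s}_1/\mathfrak{s}_0^2\neq0$ implies $G(z,\zeta)\neq0$ for $z\widehat{\rightarrow}\infty$; put $l_1:=\mathfrak{s}_0^2/\mathfrak{s}_1=s_{0,0}^2/(s_{0,1}z+2s_{1,1}+s_{1,0}\zeta)$ and $\tau:=1/G-l_1$. Writing $G=(\mathfrak{s}_1/\mathfrak{s}_0^2)(1+\varepsilon)$ with $\varepsilon=o(\mathfrak{s}_0^2/\mathfrak{s}_1)=o(l_1)$, one finds $1/G=l_1(1-\varepsilon+o(\varepsilon))=l_1+o(l_1^2)$, hence $\tau=o(l_1^2)$ and in particular $\tau=o(l_1)$, which is \eqref{14p.7.4r.2}. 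Substituting $G=(l_1+\tau)^{-1}$ back into $F=(-z\zeta\, m_1+G)^{-1}$ yields precisely \eqref{14p.7.3r1} with the atom \eqref{14p.7.4r.3}. Equivalently, one may apply Theorem~\ref{14p.int.th_ind} to $F$ in the variable $w=z\zeta$ with data $\{\mathfrak{s}_0,\mathfrak{s}_1\}$, obtaining $F=-\cfrac{\mathfrak{s}_0}{w-\mathfrak{s}_1/\mathfrak{s}_0+\tau'}$ with $\tau'=o(1)$, and then rearrange this $J$-fraction step algebraically into the above $S$-fraction form; the bound $\tau=o(l_1)$ comes out the same way.

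The computation itself is routine; the only delicate point is the $o$-bookkeeping along the nontangential approach $z\widehat{\rightarrow}\infty$ in two variables — namely justifying the two reciprocal expansions on the region where $\mathfrak{s}_0\neq0$ and $\mathfrak{s}_1\neq0$, and checking that the remainder produced satisfies the sharp bound $\tau=o(l_1)$ and not merely $\tau=o(1)$ (this follows from $l_1\to0$, or from $l_1$ tending to a nonzero constant in the degenerate subcase $s_{0,1}=s_{1,0}=0$). Should one prefer the route through Theorem~\ref{14p.int.th_ind}, the only extra work is tracking the affine rearrangement that turns the standard continued-fraction step into the Stieltjes-like one and that produces the denominators $-z\zeta\, m_1$ and $l_1$.
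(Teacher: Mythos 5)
Your proposal is correct and follows essentially the same route as the paper: a first inversion (equivalently, the one--variable Schur step of Proposition~\ref{14p.sym. pro.bw1} in the variable $z\zeta$) giving $1/F=-z\zeta/\mathfrak{s}_0+\mathfrak{s}_1/\mathfrak{s}_0^2+o(1)$, followed by a second reciprocation of the constant-plus-remainder term to produce the atom $(m_1,l_1)$ and the bound $\tau=o(l_1)$, using that $\mathfrak{s}_0=s_{0,0}$ is constant. Your extra remarks on why $\mathfrak{s}_0,\mathfrak{s}_1\neq 0$ on $\mathfrak{N}^+(\mathbf{s})$ and on the sharpened estimate $\tau=o(l_1^2)$ only make the bookkeeping more explicit than in the paper's proof.
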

\begin{proof} Let the assumption of  Theorem hold.  Consequently, any solution of the  basic problem $MP(\mathbf{s}, \mathfrak{N}^+(\mathbf{s}))$ admits the  expansion 
\[\begin{split}
	F(z,\zeta)&=-\cfrac{s_{0,0}}{z\zeta}-\cfrac{s_{0,1}z+2s_{1,1}+s_{1,0}\zeta}{z^2\zeta^2}+o\left(\cfrac{1}{z^2\zeta^2}\right)=\\&=
	-\cfrac{\mathfrak {s}_0(z,\zeta)}{z\zeta}-\cfrac{\mathfrak {s}_1(z,\zeta)}{z^2\zeta^2}+o\left(\cfrac{1}{z^2\zeta^2}\right).
	\end{split}
\]
By Proposition \ref{14p.sym. pro.bw1}, we obtain that $F$ can be represented by
\begin{equation}\label{14p.7.4r.5}
	F(z,\zeta)=-\cfrac{b_0(z,\zeta)}{a_0(z,\zeta)+\tau_1(z,\zeta)}=-\cfrac{\mathfrak {s}_0(z,\zeta)}{z\zeta-\cfrac{\mathfrak {s}_1(z,\zeta)}{\mathfrak {s}_0(z,\zeta)}+\tau_1(z,\zeta)},
\end{equation}
where $\tau_1(z,\zeta)=o(1)$. On the other hand, we can rewrite \eqref{14p.7.4r.5} as follows
\[\begin{split}
	F(z,\zeta)&=\cfrac{1}{-\cfrac{z\zeta}{\mathfrak {s}_0(z,\zeta)}+\cfrac{\mathfrak {s}_1(z,\zeta)}{\mathfrak {s}^2_0(z,\zeta)}-\cfrac{\tau_1(z,\zeta)}{\mathfrak {s}_0(z,\zeta)}}=\\&=
	\left\{\mbox{Due to }s_0(z,\zeta)=s_{0,0},\mbox{then } \cfrac{\tau_1(z,\zeta)}{\mathfrak {s}_0(z,\zeta)}=o(1)\right\}=\\&=
	\cfrac{1}{-\cfrac{z\zeta}{\mathfrak {s}_0(z,\zeta)}+\cfrac{1}{\cfrac{\mathfrak {s}^2_0(z,\zeta)}{\mathfrak {s}_1(z,\zeta)}+\tau(z,\zeta)}}=\\&
	=\cfrac{1}{-\cfrac{z\zeta}{s_{0,0}}+\cfrac{1}{\cfrac{s^2_{0,0}}{s_{0,1}z+2s_{1,1}+s_{1,0}\zeta}+\tau(z,\zeta)}}.
\end{split}
\]
Hence, the atom $(m_1,l_1)$ can be defined by \eqref{14p.7.4r.3} and the parameter   $\tau(z,\zeta)$ satisfies \eqref{14p.7.4r.2}. This completes the proof.
\end{proof}
\subsection{General case}
 \textbf{Truncated problem $MP(\mathbf{s}, \mathfrak{N}^+(\mathbf{s}))$}:
 Given $n\in\mathbb{N}$ and  a sequence $\mathbf{s}=\{s_{i,j}\}_{i,j=0}^{2n-1}$ of real numbers, describe the set $\mathcal{M}^+({\mathbf s})$  of  functions $F$, which satisfy the asymptotic expansion
\begin{equation}\label{14.3.40r}\begin{split}
	F(x, \zeta)=&-\cfrac{s_{0,0}}{z\zeta}-\cfrac{s_{0,1}z+2s_{1,1}+s_{1,0}\zeta}{z^2\zeta^2}-\\&-
	\cfrac{\sum\limits_{i=0}^{1}{i+2\choose i}(s_{i,2}z^{2-i}+s_{2,i}\zeta^{2-i})+{2+2\choose 2}s_{2,2}}{z^3\zeta^3}-\cdots-\\&\!-\!
	\cfrac{\sum\limits_{i=0}^{2n\!-\!2}{i+2n\choose i}(s_{i,2n}z^{2n\!-\!1\!-\!i}+s_{2n,i}\zeta^{2n\!-\!1\!-\!i})\!+\!{4n\!-\!2\choose 2n\!-\!1}s_{2n\!-\!1,2n\!-\!1}}{z^{2n}\zeta^{2n}}\!+\!o\!\left(\!\cfrac{1}{z^{2n}\zeta^{2n}}\!\right)\!=\!\\&=
	-\cfrac{\mathfrak{s}_0^{(0)} (z,\zeta)}{z\zeta}-\cfrac{\mathfrak{s}_1^{(0)}(z,\zeta) }{z^2\zeta^2 }-\cdots--\cfrac{\mathfrak{s}^{(0)}_{2n-1} (z,\zeta)}{z^{2n}\zeta^{2n} }+o\left(\cfrac{1}{z^{2n}\zeta^{2n} }\right),
\end{split}
\end{equation}
where the associated sequence  $\mathfrak {s}(z,\zeta)=\{\mathfrak {s}_{j}(z,\zeta)\}_{j=0}^{2n-1}$ is the simple plus  regular sequence on the set $\mathfrak{N}^{+}(\mathbf{s})$ and defined by
\begin{equation}\label{14p.3.44r}\begin{split}&
\mathfrak s_j^{(0)}(z,\zeta)=\sum\limits_{i=0}^{j-1}{i+j\choose i}\left(s_{i,j}z^{j-i}+s_{j,i}\zeta^{j-i}\right)+{2j\choose j}s_{j,j},
\\&\mathfrak s_0^{(0)}(z,\zeta)=s_{0,0},\quad 
	j=\overline{1,2n-1}.
	\end{split}
\end{equation}

\begin{theorem}\label{14p.th7.2} Let $\mathbf{s}=\{s_{i,j}\}_{i,j=0}^{2n-1}$  be a sequence of real numbers and let $\mathfrak {s}(z,\zeta)=\{\mathfrak {s}_{j}(z,\zeta)\}_{j=0}^{2n-1}$ be an associated sequence of $\mathbf{s}$, which is defined by \eqref{14p.3.44r} and $\mathfrak {s}(z,\zeta)$ is a  simple  plus regular sequence on the set $\mathfrak{N}^{+}(\mathbf{s})$. Then any solution of the truncated problem $MP(\mathbf{s}, \mathfrak{N}^+(\mathbf{s}))$  admits the   representation
\begin{equation}\label{14p.7.3}
	F(z,\zeta)=\cfrac{1}{-z\zeta m_1(z, \zeta)+\cfrac{1}{l_1(z, \zeta)+\cdots+\cfrac{1}{-z\zeta m_{n}(z, \zeta)+\cfrac{1}{l_n(z,\zeta)+\tau(z,\zeta)}}}}, 
\end{equation}
where the parameter $\tau$ satisfies the condition
\begin{equation}\label{14p.7.4}
	\tau(z,\zeta)=o\left(l_n(z,\zeta)\right)
\end{equation}
and the atoms $(m_j,l_j)$ can be found by the atoms $(a_j,b_j)$  as 
\begin{equation}\label{14p.7.5}\begin{split}&
	b_0(z,\zeta)=\cfrac{1}{d_0(z,\zeta)}\quad\mbox{and}\quad a_0(z,\zeta)=\cfrac{1}{d_0(z,\zeta)}\left(z\zeta m_1(z,\zeta)-\cfrac{1}{l_1(z,\zeta)}\right),\\&
	b_{j-1}(z,\zeta)=\cfrac{1}{l^2_{j-1}(z,\zeta)d_{j-1}(z,\zeta)d_j(z,\zeta)},\\&
	a_{j-1}(z,\zeta)=\cfrac{1}{d_j(z,\zeta)}\left(z\zeta m_j(z,\zeta)-\cfrac{1}{l_{j-1}(z,\zeta)}-\cfrac{1}{l_j(z,\zeta)}\right).
	\end{split}
\end{equation}

\end{theorem}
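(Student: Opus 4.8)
The plan is to obtain the continued fraction \eqref{14p.7.3} by converting the canonical Schur/Jacobi-type continued fraction \eqref{14p.3.21} from Theorem~\ref{14p.sym. pro.th1} into a Stieltjes-like form, one level at a time, exactly mimicking the single-step transformation carried out in Lemma~\ref{14p.lem7.1}. First I would invoke Theorem~\ref{14p.sym. pro.th1}: since $\mathfrak{s}(z,\zeta)$ is simple plus regular on $\mathfrak{N}^+(\mathbf{s})$, it is in particular simple regular on $\mathfrak{N}(\mathbf{s})$, so every solution $F$ of $MP(\mathbf{s},\mathfrak{N}^+(\mathbf{s}))$ admits the representation \eqref{14p.3.21} with atoms $(a_j,b_j)$ given by \eqref{14p.3.23} and tail parameter $\tau$ satisfying \eqref{14p.3.22}. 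The extra hypothesis $D_i^+(z,\zeta)\neq0$ is precisely what guarantees that the denominators $l_j(z,\zeta)$ introduced below are nonvanishing on $\mathfrak{N}^+(\mathbf{s})$, so the Stieltjes-like fraction is well defined.

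The key algebraic step is the identity, used already in the proof of Lemma~\ref{14p.lem7.1},
\[
 -\cfrac{b}{a+\omega}=\cfrac{1}{-\cfrac{a}{b}-\cfrac{\omega}{b}}
 =\cfrac{1}{-z\zeta m+\cfrac{1}{l+\widetilde{\omega}}},
\]
valid whenever $a=z\zeta-\mathfrak{s}_1/\mathfrak{s}_0$ and $b=\mathfrak{s}_0$ with $\mathfrak{s}_0$ depending only on the original data (so that dividing the residual by $\mathfrak{s}_0$ preserves the order estimate), upon setting $m=1/\mathfrak{s}_0$, $l=\mathfrak{s}_0^2/\mathfrak{s}_1$ and absorbing the $o(1)$ remainder into $\widetilde{\omega}$. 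I would apply this to the outermost atom $(a_0,b_0)$ of \eqref{14p.3.21}, producing $m_1=1/d_0$, $l_1$ in terms of $\mathfrak{s}_0^{(0)},\mathfrak{s}_1^{(0)}$, and leaving inside a fraction of the form $1/(l_1+\widetilde{F}_1)$ where $\widetilde{F}_1$ is a linear-fractional image of the next Jacobi tail $-b_1/(a_1-\cdots)$. Iterating: at stage $j$ I rewrite the partial numerator $-b_{j-1}/(a_{j-1}+(\text{tail}))$ appearing inside $1/(l_{j-1}+\cdot)$ in the same way, which splits off the block $-z\zeta m_j+1/(l_j+\cdots)$ and introduces the relations \eqref{14p.7.5} between $(a_j,b_j)$ and $(m_j,l_j)$ together with the auxiliary quantities $d_j$. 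Doing this for $j=1,\dots,n$ turns the depth-$n$ Jacobi fraction into the depth-$n$ Stieltjes-like fraction \eqref{14p.7.3}, and the bottom-level remainder inherits the estimate $\tau(z,\zeta)=o(l_n(z,\zeta))$ from \eqref{14p.3.22} after the rescaling by $\mathfrak{s}_0^{(n-1)}$; the order conditions \eqref{14.2.12}, \eqref{14p.3.22} are what make each absorption step legitimate. Conversely, collapsing the two-term block back shows the $(a_j,b_j)$ defined by \eqref{14p.7.5} reproduce \eqref{14p.3.21}, so the two parametrizations of $\mathcal{M}^+(\mathbf{s})$ coincide.

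The main obstacle is bookkeeping rather than a conceptual difficulty: one must carry along the correction terms $d_j(z,\zeta)$ that appear when a linear-fractional transformation is pushed through the reciprocal, verify that each $d_j$ and each $l_j$ is nonzero on $\mathfrak{N}^+(\mathbf{s})$ (this is where $D_i^+\neq 0$ enters, via the determinantal formula \eqref{14p.3.25} for $\mathfrak{s}_0^{(j)},\mathfrak{s}_1^{(j)}$ and the classical identification of $D_i^+$ with the relevant minors), and check that at every level the residual term, after division by the data-dependent leading coefficient, still has the order needed to be absorbed into the next parameter without changing the moments being interpolated. I would organize the induction so that the inductive hypothesis packages exactly the right intermediate object — a Stieltjes-like fraction of depth $j$ terminating in $1/(l_j+\widetilde{F}_j)$ with $\widetilde{F}_j$ a prescribed linear-fractional image of $F_{j+1}$ having a known asymptotic expansion — and then the step from $j$ to $j+1$ is a single application of the boxed identity together with Lemma~\ref{14p.lem7.1}'s computation. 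No step beyond routine continued-fraction algebra is expected.
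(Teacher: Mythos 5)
Your proposal is correct and follows essentially the same route as the paper: both first invoke Theorem~\ref{14p.sym. pro.th1} to get the Jacobi-type representation \eqref{14p.3.21} and then convert it into the Stieltjes-like fraction \eqref{14p.7.3}--\eqref{14p.7.5}. The only difference is that the paper dispatches the conversion by citing \cite[Theorem 4.1]{DK15}, whereas you sketch carrying it out explicitly by iterating the single-step identity from Lemma~\ref{14p.lem7.1}, which is exactly the content of that cited result.
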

\begin{proof}Let the assumption of Theorem hold. By Theorem \ref{14p.sym. pro.th1}, $F$ admits the representation \eqref{14p.3.21}--\eqref{14p.3.25}.  According to \cite[Theorem 4.1]{DK15}, we can rewrite  \eqref{14p.3.21} in terms of the Stieltjes like fraction \eqref{14p.7.3}--\eqref{14p.7.5}. This completes the proof.~\end{proof}
\subsection{The $n$-th convergent}
By \cite[Proposition 5.4]{DK17} and \cite{Wall}, the fraction \eqref{14p.7.3} is associated with the following system
 \begin{equation}\label{14p.7.7}
	\begin{cases}
  y_{2i-1}-y_{2i-3}=-z\zeta m_i(z,\zeta)y_{2i-2},\\
   y_{2i}-y_{2i-2}=l_{i}(z,\zeta) y_{2i-1}, 
 \end{cases}\quad  i=\overline{1,n}.
\end{equation}

The solutions of the system \eqref{14p.7.7} are determined by  $P_i^+$ and $Q_i^+$, which satisfy the initial conditions
 \begin{equation}\label{14p.7.8}
 	P^+_{-1}(z,\zeta)\equiv 0, \, P^+_{0}(z,\zeta)\equiv 1,\,  Q^+_{-1}(z,\zeta)\equiv 1\mbox{ and } Q_{0}^+(z,\zeta)\equiv 0.
 \end{equation}
 \begin{proposition}\label{14p.th7.2s} Let $\mathbf{s}=\{s_{i,j}\}_{i,j=0}^{2n-1}$  be a sequence of real numbers and let $\mathfrak {s}(z,\zeta)=\{\mathfrak {s}_{j}(z,\zeta)\}_{j=0}^{2n-1}$ be an associated sequence of $\mathbf{s}$, which is defined by \eqref{14p.3.44r} and $\mathfrak {s}(z,\zeta)$ is a  simple plus regular sequence on the set $\mathfrak{N}^{+}(\mathbf{s})$. Then any solution of the truncated problem $MP(\mathbf{s}, \mathfrak{N}^+(\mathbf{s}))$ takes the   representation
\begin{equation}\label{14p.7.9}
 	F(z,\zeta)=\cfrac{Q^+_{2n}(z,\zeta)+Q^+_{2n-1}(z,\zeta)\tau(z,\zeta)}{P^+_{2n}(z,\zeta)+P^+_{2n-1}(z,\zeta)\tau(z,\zeta)},
  \end{equation}
 where the parameter $\tau$ satisfies \eqref{14p.7.4}, and  $P^+_j$ and $Q^+_j$ are defined by  \eqref{14p.7.7}--\eqref{14p.7.8}.
 \end{proposition}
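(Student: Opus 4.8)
The statement to prove is Proposition~\ref{14p.th7.2s}, which recasts the Stieltjes-like continued-fraction representation \eqref{14p.7.3} of Theorem~\ref{14p.th7.2} in terms of the $2n$-th and $(2n-1)$-th convergents of the system \eqref{14p.7.7}. The plan is to follow exactly the template already used in the proof of Proposition~\ref{14p.sym. prop4.1*}, but now for the two-step recursion \eqref{14p.7.7} rather than the one-step recursion \eqref{14p.4*.7}. First I would invoke Theorem~\ref{14p.th7.2} to obtain that any solution $F$ of $MP(\mathbf{s},\mathfrak{N}^+(\mathbf{s}))$ has the form \eqref{14p.7.3} with $\tau$ satisfying \eqref{14p.7.4}. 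Then I would introduce, in analogy with \eqref{14p.4*.9}, the two families of linear fractional transformations
\begin{equation*}
	S_i(\omega)=-z\zeta m_i(z,\zeta)+\omega,\qquad
	R_i(\omega)=\cfrac{1}{l_i(z,\zeta)+\omega},\qquad i=\overline{1,n},
\end{equation*}
so that the finite continued fraction \eqref{14p.7.3} is exactly the composition
\begin{equation*}
	F(z,\zeta)=R_1\circ S_1\circ R_2\circ S_2\circ\cdots\circ R_n\circ S_n(\tau)
\end{equation*}
evaluated at the parameter $\tau$ (reading the fraction from the outside in; note the outermost block in \eqref{14p.7.3} is the reciprocal $R_1$, not a $-z\zeta m_1$ term).

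Next I would identify the action of each elementary transformation on the $2\times 2$ ``convergent matrix''. The recursion \eqref{14p.7.7} is the standard split of a Jacobi-type three-term recurrence into its odd and even half-steps; each half-step corresponds to left-multiplication of $\begin{pmatrix}Q^+_j & Q^+_{j-1}\\ P^+_j & P^+_{j-1}\end{pmatrix}$ by an elementary unipotent or anti-diagonal matrix built from $-z\zeta m_i$ or $l_i$. Composing these $2n$ elementary factors and comparing with the Möbius transformation attached to the product matrix gives
\begin{equation*}
	R_1\circ S_1\circ\cdots\circ R_n\circ S_n(\tau)
	=\cfrac{Q^+_{2n}(z,\zeta)+Q^+_{2n-1}(z,\zeta)\,\tau(z,\zeta)}{P^+_{2n}(z,\zeta)+P^+_{2n-1}(z,\zeta)\,\tau(z,\zeta)},
\end{equation*}
which is precisely \eqref{14p.7.9}. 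The bookkeeping here — that $2n$ half-steps produce the indices $2n$ and $2n-1$, and that the initial conditions \eqref{14p.7.8} are the correct ones to seed the product — is routine once one writes out the first couple of compositions; alternatively one can cite \cite[Proposition 5.4]{DK17} and \cite{Wall} directly, as the statement already does, and simply note that \eqref{14p.7.3} is the $n$-th approximant of \eqref{14p.7.7} with the tail replaced by $\tau$. Finally, the condition \eqref{14p.7.4} on $\tau$ is inherited verbatim from Theorem~\ref{14p.th7.2}, so nothing further is needed there.

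The main obstacle — really the only non-mechanical point — is getting the index and parity bookkeeping exactly right: one must check that the outermost level of \eqref{14p.7.3} is an $R$-block (a reciprocal) and not an $S$-block, so that the composition order is $R_1\circ S_1\circ\cdots$ and the terminal convergents are the even-indexed $P^+_{2n},Q^+_{2n}$ paired with the odd-indexed $P^+_{2n-1},Q^+_{2n-1}$ multiplying $\tau$, matching the placement in \eqref{14p.7.9} (note $Q^+$ and $P^+$ appear in the opposite order from the non-symmetric convergent formula \eqref{14p.4*.6}, which is a consequence of the extra reciprocal at the outermost level). Once that is pinned down, the determinant identity $Q^+_{j}P^+_{j-1}-Q^+_{j-1}P^+_{j}=\pm\prod(\text{something})\neq 0$ guarantees the Möbius transformation is non-degenerate on $\mathfrak{N}^+(\mathbf{s})$, and the proof closes. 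I would therefore keep the written proof short: cite Theorem~\ref{14p.th7.2}, cite \cite[Proposition 5.4]{DK17} and \cite{Wall} for the approximant formula of the system \eqref{14p.7.7}, and conclude.
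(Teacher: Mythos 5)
Your overall strategy coincides with the paper's: invoke Theorem~\ref{14p.th7.2} to obtain the Stieltjes-like fraction \eqref{14p.7.3}, factor it into elementary linear-fractional maps, and identify the composition with the M\"obius transformation whose coefficients are the convergents of the system \eqref{14p.7.7}--\eqref{14p.7.8}. However, the explicit factorization you write down fails, and it fails precisely at the ``parity bookkeeping'' point you yourself single out as the only non-mechanical step. With $S_i(\omega)=-z\zeta m_i+\omega$, $R_i(\omega)=1/(l_i+\omega)$ and the usual convention that the leftmost factor acts last (outermost), one gets
\[
R_1\circ S_1(\omega)=\cfrac{1}{l_1-z\zeta m_1+\omega},
\]
which collapses the two outermost levels of \eqref{14p.7.3} into a single level: already for $n=1$ your composition produces $1/(l_1-z\zeta m_1+\tau)$ instead of the required $1/\bigl(-z\zeta m_1+1/(l_1+\tau)\bigr)$. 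Your parenthetical claim that ``the outermost block in \eqref{14p.7.3} is the reciprocal $R_1$, not a $-z\zeta m_1$ term'' is exactly backwards: the outermost level of \eqref{14p.7.3} is the reciprocal of $-z\zeta m_1+(\cdot)$, i.e.\ an $m$-type block.

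The repair is what the paper does: make \emph{every} level a full reciprocal, $T^+_{2i-1}(\omega)=1/(-z\zeta m_i+\omega)$ and $T^+_{2i}(\omega)=1/(l_i+\omega)$, and write $F=T^+_1\circ T^+_2\circ\cdots\circ T^+_{2n}(\tau)$. Each $T^+_k$ is the M\"obius map of $\left(\begin{smallmatrix}0&1\\1&c_k\end{smallmatrix}\right)$ with $c_{2i-1}=-z\zeta m_i$ and $c_{2i}=l_i$; the entries of the product of these $2n$ matrices satisfy the three-term recursion $y_k=c_ky_{k-1}+y_{k-2}$, which is \eqref{14p.7.7}, with the seeds \eqref{14p.7.8}, and this yields \eqref{14p.7.9}. (Your side remark that $Q^+,P^+$ appear ``in the opposite order'' from \eqref{14p.4*.6} is not a structural difference: in both formulas the lower-indexed convergent is the coefficient of $\tau$; only the order in which the two summands are written differs.) The remaining ingredients of your argument --- inheriting \eqref{14p.7.4} from Theorem~\ref{14p.th7.2} and nondegeneracy of the resulting M\"obius map on $\mathfrak{N}^{+}(\mathbf{s})$ --- are fine and match the paper.
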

 \begin{proof} Assume  $F$ is the solution of the truncated problem  $MP(\mathbf{s}, \mathfrak{N}^+(\mathbf{s}))$. By Theorem~\ref{14p.th7.2}, $F$ admits the representation \eqref{14p.7.3}-\eqref{14p.7.5}. Let us define linear-fractional transformations $T_j^+$ by
 \begin{equation}\label{14p.7.10}
 	T_{2i-1}^+(\tau)=\cfrac{1}{-z\zeta m_i(z,\zeta)+\tau(z,\zeta)} \quad\mbox{and}\quad T_{2i}^+(\tau)=\cfrac{1}{l_i(z,\zeta)+\tau(z,\zeta)}.
\end{equation}
 Consequently, $F$ can be rewritten as 
  \begin{equation}\label{14p.7.11}
  	F(z,\zeta)=T_1^+\circ T^+_1\circ \ldots \circ T^+_{n-1}\circ T^+_{n}(\tau)=\cfrac{Q^+_{2n}(z,\zeta)+Q^+_{2n-1}(z,\zeta)\tau(z,\zeta)}{P^+_{2n}(z,\zeta)+P^+_{2n-1}(z,\zeta)\tau(z,\zeta)},
  \end{equation}
  where $\tau$ satisfies \eqref{14p.7.4}, $P^+_i$ and $Q_{i}^+$ are defined by \eqref{14p.7.7}--\eqref{14p.7.8}. This completes the proof.
 \end{proof}

\section{Moment problem for  atomic measures}

Let an atomic measure $\mu$ be defined by
\begin{equation}\label{14p.4.0}
	\mu(x)=\sum_{k=0}^{M}m_k\delta_{x_k}(x),
\end{equation}
where $x_k=(t_k,\tau_k)\in \mathbb{C}^2$, $m_k\in \mathbb{C}$ and $\delta$ is a Dirac delta. 

Let $\mathbf{s}=\{s_{i,j}\}_{i,j=0}^{\ell}$ be the finite moment sequence associated with the measure $\mu$, where $\ell \in  \mathbb{ Z}_{+}\cup\{\infty\} $ and
\begin{equation}\label{14p.pre4.1}
	s_{i,j}=\int_A t^i\tau^jd\mu(t,\tau)=\sum_{k=0}^M m_kt^i_k\tau^j_k.
\end{equation}

\begin{remark}
 The measure $\mu$ can be reconstructed from finitely many moments, i.e., the system \eqref{14p.pre4.1} is solvable. A sufficient condition is $\ell>M$, see e.g. \cite{KuPeRovO16}. 
\end{remark}

The associated function $F$  can be  found by
\begin{equation}\label{14p.4.1}\begin{split}F(z, \zeta)&=-\frac{1}{z\zeta}\int\int_A\frac{d\mu(t,\tau)}{1-\frac{t}{z}-\frac{\tau}{\zeta}}= -\frac{1}{z\zeta}\int\int_A\sum_{k=0}^\infty\left(\frac{t}{z}+\frac{\tau}{\zeta}\right)^k d\mu(t,\tau)=\\&=-\sum_{j=0}^M m_j\sum_{k=0}^{\infty}\left(\cfrac{t_j}{z}+\cfrac{\tau_j}{\zeta}\right)^k=-\sum_{j=0}^M m_j\sum_{k,i=0}^{\infty}{k+i\choose i} \cfrac{t_j^k}{z^{k+1}}\cfrac{\tau_j^k}{\zeta^{k+1}}=\\&=-\cfrac{\sum\limits_{j=0}^M m_j}{z\zeta}-\cfrac{\sum\limits_{j=0}^M m_j(\tau_jz+2t_i\tau_j+t_j\zeta)}{z^2\zeta^2}-\cdots-\\&-
\cfrac{\sum\limits_{j=0}^M m_j\left(\sum\limits_{i=0}^{n-1}{i+n\choose i} t_j^i\tau_j^nz^{n-i}+{2n\choose n}t_j^n\tau_j^n+ \sum\limits_{i=0}^{n-1}{n+i\choose n}t_j^n\tau_j^{i}\zeta^{n-i} \right)}{z^{n+1}\zeta^{n+1}}+\\&+o\left(\cfrac{1}{z^{n+1}\zeta^{n+1}}\right).
\end{split}
\end{equation}

We set 
\begin{equation}\label{14p.4.2}\begin{split}&
 s_k(z,\zeta)=\sum\limits_{j=0}^M m_j\left(\sum\limits_{i=0}^{k-1}{i+k\choose i} t_j^i\tau_j^kz^{k-i}+{2k\choose k}t_j^k\tau_j^k+ \sum\limits_{i=0}^{k-1}{k+i\choose k}t_j^k\tau_j^{i}\zeta^{k-i} \right),\\&
 s_0(z,\zeta)=\sum\limits_{j=0}^M m_j,\qquad  k=\overline{1,n}.
 \end{split}
\end{equation}

Hence, the function $F$ is associated with the sequence $\mathbf{s}(z,\zeta)=\{s_{i}(z,\zeta)\}_{i=0}^n$ and the sequence $\mathbf{s}$ is called the associated sequence of the measure $\mu$. Moreover, the associated function $F$ takes the short form
\begin{equation}\label{14p.4.3}
	F(z,\zeta)=-\cfrac{s_{0}(z,\zeta)}{z\zeta}-\cfrac{s_{1}(z,\zeta)}{z^2\zeta^2}-\cdots-\cfrac{s_{n}(z,\zeta)}{z^{n+1}\zeta^{n+1}}+o\left(\cfrac{1}{z^{n+1}\zeta^{n+1}}\right).
\end{equation}

\textbf{Truncated   problem $MP(\mu,2n-1,\mathfrak{N}(\mathbf{s}))$ with the atomic measure}:

Given the atomic measure $\mu$, which is defined by \eqref{14p.4.0}. Describe the set $\mathcal{M}(\mu,2n-1)$  of  functions $F$, which have the  asymptotic expansion \eqref{14p.4.1}.
\begin{theorem}\label{14p.4.th.4.1}
 Let $M\in\mathbb{Z}_{+}$, $x_j\in\mathbb{C}^2$, $j=\overline{0,M}$ and 
 $\mu(x)=\sum\limits_{j=0}^{M}m_j\delta_{x_j}(x)$ be an atomic measure and let $\mathbf{s}=\{s_{i}(z,\zeta)\}_{i=0}^{2n-1}$ be a associated sequence defined by \eqref{14p.4.2}, which is   a simple regular  on the set $\mathfrak{N}(\mathbf{s})$. Then any solution of the truncated  problem $MP(\mu,2n-1,\mathfrak{N}(\mathbf{s}))$  admits the  representation
\begin{equation}\label{14p.4.5}
	F(z,\zeta)=-\cfrac{b_0}{a_0(z, \zeta)-\cfrac{b_1(z, \zeta)}{a_1(z, \zeta)-\cdots-\cfrac{b_{n-1}(z, \zeta)}{a_{n-1}(z, \zeta)+\tau(z, \zeta)}}}, 
\end{equation}
where the parameter $\tau$ satisfies the condition
\begin{equation}\label{14p.4.6}
	\tau(z,\zeta)=o\left(\cfrac{s^{(n-1)}_1(z,\zeta)}{ s_{0}^{(n-1)}(z,\zeta)}\right)
\end{equation}
and the atoms $(a_k,b_k)$ are defined by 
\begin{equation}\label{14p.4.7}\begin{split}
	&a_{0}(z,\zeta)=z\zeta-\cfrac{s_1(z,\zeta)}{s_0(z,\zeta)},\quad b_0=s_0(z,\zeta),\\& a_k(z,\zeta)=z\zeta-\cfrac{s^{(k)}_1(z,\zeta)}{ s_{0}^{(k)}(z,\zeta)}, \quad b_k= s_{0}^{(k)}(z,\zeta),
\end{split}
\end{equation}
where the recursive sequences ${\bf s}^{(k)}(z,\zeta)=\left\{s^{(k)}_j(z,\zeta)\right\}_{j=0}^{2n-1-2k}$ are defined by 
 \begin{equation}\label{14p.4.8}
s_{j}^{(k)}(z,\zeta)=\cfrac{(-1)^{k+1}}{\left( s_0^{(k-1)}(z,\zeta)\right)^{j+3}}\begin{vmatrix}
 s_1^{(k-1)} (z,\zeta)& s_0^{(k-1)}(z,\zeta) &0&\ldots& 0\\
\vdots& \ddots &\ddots&\ddots & \vdots\\
\vdots&  &\ddots&\ddots &0\\
\vdots&  & &\ddots &s_0^{(k-1)}(z,\zeta)\\
 s_{j+2}^{(k-1)} (z,\zeta)& \ldots &\ldots&\ldots&   s_1^{(k-1)} (z,\zeta)
\end{vmatrix},\end{equation}
where  $\mathbf{s}^{(0)}(z,\zeta)= \mathbf{s}(z,\zeta)$, $ j=\overline{0,2n-1-2k}$ and $k=\overline{0,n-1}$.

Furthermore,  \eqref{14p.4.5} can be rewritten  as
 \begin{equation}\label{14p.4*.6@}
  	F(z,\zeta)=\cfrac{Q_{n-2}(z,\zeta)\tau(z,\zeta)+Q_{n-1}(z,\zeta)}{P_{n-2}(z,\zeta)\tau(z,\zeta)+P_{n-1}(z,\zeta)},
 \end{equation}
where the parameter $\tau$ satisfies \eqref{14p.4.6}, $P_j$ and $Q_j$ are the solutions of the system \eqref{14p.4*.4}-\eqref{14p.4*.5}, where the atoms $(a_j,b_j)$ are defined by \eqref{14p.4.7}

 \end{theorem}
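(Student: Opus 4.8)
The plan is to recognize the truncated problem $MP(\mu,2n-1,\mathfrak{N}(\mathbf{s}))$ for the atomic measure $\mu$ as a particular instance of the symmetric truncated problem $MP(\mathbf{s},\mathfrak{N}(\mathbf{s}))$ solved in Theorem~\ref{14p.sym. pro.th1}, and then to read off the convergent representation from Proposition~\ref{14p.sym. prop4.1*}.

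First I would expand the kernel in \eqref{14p.4.1} into the geometric series $\sum_{k\ge 0}(t/z+\tau/\zeta)^k$, which is legitimate nontangentially as $z\widehat{\rightarrow}\infty$, $\zeta\widehat{\rightarrow}\infty$ for each of the finitely many atoms, and collect the coefficient of $z^{-(k+1)}\zeta^{-(k+1)}$. Using $s_{i,j}=\sum_{k}m_kt_k^i\tau_k^j$, this coefficient equals the polynomial $s_k(z,\zeta)$ of \eqref{14p.4.2}; comparing \eqref{14p.4.2} term by term with \eqref{14p.3.44r} then shows that the associated sequence $\mathbf{s}(z,\zeta)$ of $\mu$ coincides with the associated sequence $\mathfrak s^{(0)}(z,\zeta)$ of the moment array $\{s_{i,j}\}_{i,j=0}^{2n-1}$. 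Consequently the associated function $F$ of $\mu$ admits exactly the asymptotic expansion \eqref{14p.4.3}, which is precisely \eqref{14p.3.27} for this data.

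Since $\mathbf{s}(z,\zeta)$ is assumed simple regular on $\mathfrak{N}(\mathbf{s})$, Theorem~\ref{14p.sym. pro.th1} now applies directly and yields the continued fraction \eqref{14p.4.5} with the parameter bound \eqref{14p.4.6}, the atoms \eqref{14p.4.7}, and the Schur recursion \eqref{14p.4.8}; the only change is notational, writing $s^{(k)}_j$ for $\mathfrak s^{(k)}_j$. Here one uses only the regularity hypothesis and not reality of the $s_{i,j}$, so the passage to complex atoms $x_k\in\mathbb C^2$, $m_k\in\mathbb C$ is harmless. For the second assertion I would introduce the linear fractional maps $T_j$ of \eqref{14p.4*.9} so that $F=T_0\circ T_1\circ\cdots\circ T_{n-1}(\tau)$, and invoke Proposition~\ref{14p.sym. prop4.1*}: composing these maps and using the recurrence \eqref{14p.4*.4} with initial data \eqref{14p.4*.5} and the atoms \eqref{14p.4.7} produces \eqref{14p.4*.6@}.

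All the computations are routine; the only point requiring some care is verifying that the remainder $o(z^{-(n+1)}\zeta^{-(n+1)})$ displayed in \eqref{14p.4.1} in fact absorbs every contribution of order higher than $z^{-2n}\zeta^{-2n}$, so that the data of $MP(\mu,2n-1,\mathfrak{N}(\mathbf{s}))$ pin down exactly the $2n$ generalized moments $s_0(z,\zeta),\dots,s_{2n-1}(z,\zeta)$ that the $n$-step algorithm of Theorem~\ref{14p.sym. pro.th1} consumes. Once this indexing is reconciled, no further obstacle remains and the claim follows.
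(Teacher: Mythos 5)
Your proof is correct and follows essentially the same route as the paper's: identify the associated function of the atomic measure with the asymptotic expansion \eqref{14p.4.3}, then invoke Theorem~\ref{14p.sym. pro.th1} for the continued-fraction form and Proposition~\ref{14p.sym. prop4.1*} for the convergent representation \eqref{14p.4*.6@}. The extra care you take in matching \eqref{14p.4.2} with \eqref{14p.3.44r} and in reconciling the truncation order is a welcome elaboration of a step the paper leaves implicit, but it does not change the argument.
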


\begin{proof}
 Let $\mu(x)=\sum\limits_{j=0}^{M}m_j\delta_{x_j}(x)$ be the atomic measure and let $\mathbf{s}(z,\zeta)=\{s_{i}(z,\zeta)\}_{i=0}^{2n-1}$ be the  associated sequence defined by \eqref{14p.4.2}, which is the simple  regular  on the set $\mathfrak{N}(\mathbf{s})$.  Then the associated function $F$ admits the asymptotic expansion
 \begin{equation}\label{14p.4.9}
	F(z,\zeta)=-\cfrac{s_{0}(z,\zeta)}{z\zeta}-\cfrac{s_{1}(z,\zeta)}{z^2\zeta^2}-\cdots-\cfrac{s_{2n-1}(z,\zeta)}{z^{2n}\zeta^{2n}}+o\left(\cfrac{1}{z^{2n}\zeta^{2n}}\right).
\end{equation}
By Theorem~\ref{14p.sym. pro.th1},  $F$ takes the form~\eqref{14p.4.5}--\eqref{14p.4.8}.

On the other hand,  by Proposition \eqref{14p.sym. prop4.1*},  \eqref{14p.4.5} can be rewritten as \eqref{14p.4*.6@} and this completes the proof.
\end{proof}
\textbf{Truncated   problem $MP(\mu,2n-1,\mathfrak{N}^+(\mathbf{s}))$ for  atomic measure}:
Now, we study a truncated problem $MP(\mu,2n-1,\mathfrak{N}^+(\mathbf{s}))$ for atomic measure. 
\begin{theorem}\label{14p.4.th.4.1@}
 Let Let $M\in\mathbb{Z}_{+}$, $x_j\in\mathbb{C}^2$, $j=\overline{0,M}$ and  $\mu(x)=\sum\limits_{j=0}^{M}m_j\delta_{x_j}(x)$ be a atomic measure and let $\mathbf{s}(z,\zeta)=\{s_{i}(z,\zeta)\}_{i=0}^{2n-1}$ be an associated sequence defined by \eqref{14p.4.2}, which is  a simple plus  regular  on the set $\mathfrak{N}^+(\mathbf{s})$. Then any solution of the truncated  problem $MP(\mu,n,\mathfrak{N}^+(\mathbf{s}))$  admits the representation
 \begin{equation}\label{14p.7.3@}
	F(z,\zeta)=\cfrac{1}{-z\zeta m_1(z, \zeta)+\cfrac{1}{l_1(z, \zeta)+\cdots+\cfrac{1}{-z\zeta m_{n}(z, \zeta)+\cfrac{1}{l_n(z,\zeta)+\tau(z,\zeta)}}}}, 
\end{equation}
where the parameter $\tau$ satisfies the condition
\begin{equation}\label{14p.7.4@}
	\tau(z,\zeta)=o\left(l_n(z,\zeta)\right)
\end{equation}
and the atoms $(m_j,l_j)$ can be found by the atoms $(a_j,b_j)$  as follows
\begin{equation}\label{14p.7.5@}\begin{split}&
	b_0(z,\zeta)=\cfrac{1}{d_0(z,\zeta)}\quad\mbox{and}\quad a_0(z,\zeta)=\cfrac{1}{d_0(z,\zeta)}\left(z\zeta m_1(z,\zeta)-\cfrac{1}{l_1(z,\zeta)}\right),\\&
	b_{j-1}(z,\zeta)=\cfrac{1}{l^2_{j-1}(z,\zeta)d_{j-1}(z,\zeta)d_j(z,\zeta)},\\&
	a_{j-1}(z,\zeta)=\cfrac{1}{d_j(z,\zeta)}\left(z\zeta m_j(z,\zeta)-\cfrac{1}{l_{j-1}(z,\zeta)}-\cfrac{1}{l_j(z,\zeta)}\right).
	\end{split}
\end{equation}

 Moreover,  \eqref{14p.7.3@} can be rewritten as

\begin{equation}\label{14p.7.9@}
 	F(z,\zeta)=\cfrac{Q^+_{2n}(z,\zeta)+Q^+_{2n-1}(z,\zeta)\tau(z,\zeta)}{P^+_{2n}(z,\zeta)+P^+_{2n-1}(z,\zeta)\tau(z,\zeta)},
  \end{equation}
 where the parameter $\tau$ satisfies \eqref{14p.7.4@} and  $P^+_i$ and $Q^+_j$ are defined by \eqref{14p.7.7}--\eqref{14p.7.8}.

\end{theorem}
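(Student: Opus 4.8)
The plan is to reduce Theorem~\ref{14p.4.th.4.1@} to the already-established results by observing that the atomic-measure problem $MP(\mu,2n-1,\mathfrak{N}^+(\mathbf{s}))$ is nothing but the symmetric truncated problem $MP(\mathbf{s},\mathfrak{N}^+(\mathbf{s}))$ of Section~4 applied to the particular associated sequence $\mathbf{s}(z,\zeta)=\{s_i(z,\zeta)\}_{i=0}^{2n-1}$ coming from $\mu$. First I would write out the asymptotic expansion of the associated function $F$ attached to $\mu$, using the expansion~\eqref{14p.4.1}; comparing term by term with the defining expansion~\eqref{14.3.40r}, and using the definitions~\eqref{14p.4.2} of $s_k(z,\zeta)$ against~\eqref{14p.3.44r}, one checks that the moments $s_{i,j}$ produced by $\mu$ via~\eqref{14p.pre4.1} give exactly $\mathfrak s_j^{(0)}(z,\zeta)=s_j(z,\zeta)$. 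Thus $F$ admits the short-form expansion~\eqref{14p.4.9}, i.e.\ $F$ is a solution of $MP(\mathbf{s},\mathfrak{N}^+(\mathbf{s}))$ with the associated sequence being simple plus regular by hypothesis.

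Once this identification is made, the representation~\eqref{14p.7.3@}--\eqref{14p.7.5@} is immediate: I would invoke Theorem~\ref{14p.th7.2}, which states precisely that any solution of $MP(\mathbf{s},\mathfrak{N}^+(\mathbf{s}))$ admits the Stieltjes-like continued fraction expansion with atoms $(m_j,l_j)$ related to the atoms $(a_j,b_j)$ from Theorem~\ref{14p.sym. pro.th1} via the change-of-variables formulas~\eqref{14p.7.5}, and the tail parameter $\tau$ constrained by $\tau=o(l_n)$. Since~\eqref{14p.7.3@}--\eqref{14p.7.5@} are verbatim copies of~\eqref{14p.7.3}--\eqref{14p.7.5}, there is nothing further to prove here beyond citing Theorem~\ref{14p.th7.2}.

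For the second representation~\eqref{14p.7.9@}, I would similarly appeal to Proposition~\ref{14p.th7.2s}: the Stieltjes-like fraction~\eqref{14p.7.3@} is associated with the tridiagonal-type recurrence system~\eqref{14p.7.7} with initial conditions~\eqref{14p.7.8}, whose solutions $P_j^+,Q_j^+$ furnish the linear-fractional representation~\eqref{14p.7.9}. Applying the linear-fractional transformations $T_j^+$ of~\eqref{14p.7.10} to the tail parameter $\tau$ and composing yields~\eqref{14p.7.9@} directly. So the proof is essentially a two-line deduction: identify the atomic problem with the symmetric Stieltjes-like problem through the moment formula~\eqref{14p.pre4.1}, then quote Theorem~\ref{14p.th7.2} and Proposition~\ref{14p.th7.2s}.

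The only genuine point requiring care — and hence the main (mild) obstacle — is verifying the combinatorial identity that the associated sequence of the atomic measure, namely $s_k(z,\zeta)$ as defined in~\eqref{14p.4.2} with $s_{i,j}=\sum_k m_k t_k^i\tau_k^j$, coincides with the associated sequence $\mathfrak s_j^{(0)}(z,\zeta)$ in~\eqref{14p.3.44r}. This amounts to substituting $s_{i,j}=\sum_k m_kt_k^i\tau_k^j$ into the sum $\sum_{i=0}^{j-1}\binom{i+j}{i}(s_{i,j}z^{j-i}+s_{j,i}\zeta^{j-i})+\binom{2j}{j}s_{j,j}$ and recognizing the result as $\sum_k m_k\big(\sum_{i=0}^{j-1}\binom{i+j}{i}t_k^i\tau_k^j z^{j-i}+\binom{2j}{j}t_k^j\tau_k^j+\sum_{i=0}^{j-1}\binom{j+i}{j}t_k^j\tau_k^i\zeta^{j-i}\big)$, which is just the rearrangement of the binomial expansion of $(t_k/z+\tau_k/\zeta)^j$ already performed in the derivation~\eqref{14p.4.1}. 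This is routine and I would state it in one sentence rather than belabor it.
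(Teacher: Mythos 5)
Your proposal matches the paper's own proof, which simply cites Theorem~\ref{14p.th7.2} and Proposition~\ref{14p.th7.2s}; your additional verification that the associated sequence \eqref{14p.4.2} of the atomic measure coincides with \eqref{14p.3.44r} under the substitution $s_{i,j}=\sum_k m_k t_k^i\tau_k^j$ is exactly the (routine) identification the paper leaves implicit. The argument is correct and follows essentially the same route.
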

\begin{proof} The proof of the current statement  follows verbatim from Theorem \ref{14p.th7.2} and Proposition \ref{14p.th7.2s}.~\end{proof}

\section{Full problem}
In the current section, we study the full problem $MP(\mathbf{s}, \mathfrak{N}(\mathbf{s}))$ in the symmetric form. 
 
\textbf{Problem} $MP(\mathbf{s}, \mathfrak{N}(\mathbf{s}))$: Given a sequence of real numbers $\mathbf{s}=\{s_{i,j}\}_{i,j=0}^{\infty}$. Describe the set of functions,  which admit the asymptotic expansion
\begin{equation}\label{14p.5.1gz}
 	F(z,\zeta)=-\sum_{i,j=0}^\infty{i+j\choose i} \frac{s_{i,j}}{z^{i+1}\zeta^{j+1}}=-\sum_{j=0}^{\infty}\cfrac{\mathfrak{s}_j^{(0)}(z,\zeta) }{z^{i+1}\zeta^{j+1}}.
\end{equation}
We study the case, where the associated  sequence $\mathfrak{s}(z,\zeta)=\{\mathfrak{s}_j(z,\zeta)\}_{j=0}^\infty$ is simple regular on the set $ \mathfrak{N}(\mathbf{s})$ and defined by
\begin{equation}\label{14p.5.1g}
\mathfrak s_0^{(0)}(z,\zeta)=s_{0,0}\mbox{ and }\mathfrak s_j^{(0)}(z,\zeta)=\sum\limits_{i=0}^{j-1}{i+j\choose i}\left(s_{i,j}z^{j-i}+s_{j,i}\zeta^{j-i}\right)+{2j\choose j}s_{j,j}.
\end{equation}

\begin{theorem}\label{14p.sym. th5.1} Let $\mathbf{s}=\{s_{i,j}\}_{i,j=0}^{\infty}$ be a sequence of real number and let $\mathfrak{s}(z,\zeta)=\{\mathfrak{s}_j(z,\zeta)\}_{j=0}^\infty$ be an associated sequence of $\mathbf{s}$, which is  a simple regular on the set $ \mathfrak{N}(\mathbf{s})$.  Then any solution of the  problem $MP(\mathbf{s}, \mathfrak{N}(\mathbf{s}))$  admits the following representation
\begin{equation}\label{14p.5.1}
	F(z,\zeta)=-\cfrac{b_0}{a_0(z, \zeta)-\cfrac{b_1(z, \zeta)}{a_1(z, \zeta)-\cdots-\cfrac{b_{n-1}(z, \zeta)}{a_{n-1}(z, \zeta)+\cfrac{b_{n}(z, \zeta)}{\ddots}}}}, 
\end{equation}
where the atoms $(a_j,b_j)$ are defined by~\eqref{14p.3.23}--\eqref{14p.3.25}.
\end{theorem}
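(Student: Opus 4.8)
The plan is to obtain the full-problem representation as a limit of the truncated representations from Theorem~\ref{14p.sym. pro.th1}. First I would observe that for every $n\in\mathbb{N}$, the sequence $\mathbf{s}^{(n)}=\{s_{i,j}\}_{i,j=0}^{2n-1}$ is a truncation of the full sequence $\mathbf{s}=\{s_{i,j}\}_{i,j=0}^{\infty}$, and the associated sequence $\mathfrak{s}^{(0)}(z,\zeta)=\{\mathfrak{s}^{(0)}_j(z,\zeta)\}_{j=0}^{2n-1}$ built from $\mathbf{s}^{(n)}$ by~\eqref{14p.5.1g} is exactly the initial segment of the associated sequence built from $\mathbf{s}$. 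Since $\mathfrak{s}(z,\zeta)$ is assumed simple regular on $\mathfrak{N}(\mathbf{s})$, each finite truncation is a simple regular sequence on $\mathfrak{N}(\mathbf{s})$, so Theorem~\ref{14p.sym. pro.th1} applies at every level $n$. Consequently any solution $F$ of the full problem $MP(\mathbf{s},\mathfrak{N}(\mathbf{s}))$, restricted to the first $2n-1$ moments, is automatically a solution of the truncated problem $MP(\mathbf{s}^{(n)},\mathfrak{N}(\mathbf{s}))$, and hence admits the finite continued-fraction representation~\eqref{14p.3.21} with atoms $(a_j,b_j)$, $j=\overline{0,n-1}$, given by~\eqref{14p.3.23}--\eqref{14p.3.25} and some tail parameter $\tau=\tau_n$ satisfying~\eqref{14p.3.22}.

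Next I would run the Schur step of Theorem~\ref{14p.int.th_ind} recursively without truncating: starting from the full asymptotic expansion~\eqref{14p.5.1gz}, one peels off the atom $(a_0,b_0)$ to get $F=-b_0/(a_0+F_1)$ with $F_1$ again admitting a full asymptotic expansion in powers of $1/(z\zeta)$ with coefficient sequence $\mathfrak{s}^{(1)}(z,\zeta)=\{\mathfrak{s}^{(1)}_i(z,\zeta)\}_{i=0}^{\infty}$ defined by~\eqref{14p.3.25}; the simple regularity of $\mathfrak{s}(z,\zeta)$ guarantees $\mathfrak{s}^{(1)}_0(z,\zeta)=b_1(z,\zeta)\neq 0$ on $\mathfrak{N}(\mathbf{s})$, so the step is legitimate. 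Iterating $n$ times produces~\eqref{14p.5.1} truncated at level $n$ with remainder $F_n$ still carrying a full asymptotic tail; since this can be done for every $n$, the continued fraction~\eqref{14p.5.1} is well-defined as a formal (and, on appropriate domains, convergent) infinite continued fraction, with atoms exactly~\eqref{14p.3.23}--\eqref{14p.3.25}. The equality $F(z,\zeta)=$~\eqref{14p.5.1} then follows because the $n$-th convergent differs from $F$ only through the tail $F_n=o(\mathfrak{s}^{(n)}_1/\mathfrak{s}^{(n)}_0)$, i.e. by the convergent-difference formula the error tends to $0$ as $n\to\infty$ in the nontangential regime.

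The main obstacle I expect is the convergence bookkeeping for the infinite continued fraction: one must argue that the sequence of $n$-th convergents $Q_{n-1}/P_{n-1}$ (in the notation of Proposition~\ref{14p.sym. prop4.1*}, adapted to the full setting) actually converges to $F$ rather than merely matching it asymptotically at infinity. This is where I would invoke the standard theory of continued fractions (as in~\cite{Wall}) together with the fact that each tail $\tau_n$ is controlled by $o(\mathfrak{s}^{(n)}_1/\mathfrak{s}^{(n)}_0)$; combining the three-term recurrence~\eqref{14p.4*.4}--\eqref{14p.4*.5} with the smallness of $\tau_n$ shows $F - Q_{n-1}/P_{n-1}\to 0$. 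The bivariate nature of the coefficients $a_j(z,\zeta),b_j(z,\zeta)$ (polynomials in $z\zeta$ of controlled degree) does not cause extra trouble here because everything is driven by the single combined variable $z\zeta$, exactly as in the one-dimensional Stieltjes theory underlying Theorem~\ref{14p.int.th_ind}.

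Finally I would remark that the identification of the atoms with~\eqref{14p.3.23}--\eqref{14p.3.25} is \emph{inherited verbatim} from the truncated case: at level $n$ the first $n$ atoms computed in the full problem coincide with those of $MP(\mathbf{s}^{(n)},\mathfrak{N}(\mathbf{s}))$ because the Schur recursion~\eqref{14p.3.25} only ever uses finitely many entries of the (associated) sequence to produce $\mathfrak{s}^{(j)}_i$ for $i+2j\le 2n-1$. Thus no separate computation is needed, and the theorem reduces to: (i) legitimacy of each Schur step under simple regularity, and (ii) convergence of the resulting infinite continued fraction — with (ii) being the only non-routine point.
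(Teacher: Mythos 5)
Your proposal follows essentially the same route as the paper: apply the Schur step (Theorem~\ref{14p.int.th_ind} via Proposition~\ref{14p.sym. pro.bw1} and Theorem~\ref{14p.sym. pro.th1}) to peel off the atom $(a_0,b_0)$, observe that $F_1$ again has a full asymptotic expansion with coefficients $\mathfrak{s}^{(1)}$ given by~\eqref{14p.3.25}, and induct. The only divergence is your point (ii): the paper does not prove convergence of the infinite continued fraction in the theorem at all --- it treats~\eqref{14p.5.1} as the formal outcome of the induction and defers convergence to the remark immediately following (citing~\cite{Wall}), so the "non-routine point" you flag is simply not part of the paper's argument.
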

\begin{proof}
 Suppose $\mathbf{s}=\{s_{i,j}\}_{i,j=0}^{\infty}$ is the sequence of real number and  $\mathfrak{s}(z,\zeta)=\{\mathfrak{s}_j\}_{j=0}^\infty$ is the associated sequence of $\mathbf{s}$, which is  the simple regular on the set $ \mathfrak{N}(\mathbf{s})$.  By  Proposition~\ref{14p.sym. pro.bw1} and Theorem~\ref{14p.sym. pro.th1}, we obtain
\[
	F(z,\zeta)=-\cfrac{b_0}{a_0(z,\zeta)+F_1(z,\zeta)},
\]
where 
\[
 b_0\!=\!\mathfrak{s}_0^{(0)}(z,\zeta) \!=\!s_{0,0},\, a_{0}(z,\zeta)\!=\!z\zeta-\cfrac{\mathfrak{s}_1^{(0)}(z,\zeta) }{\mathfrak{s}_0^{(0)}(z,\zeta) }\quad\mbox{and}\quad F_{1}(z,\zeta)\!=\!-\!\sum_{j=0}^{\infty}\cfrac{\mathfrak{s}_j^{(1)}(z,\zeta) }{z^j\zeta^j },
\]
where $\mathfrak{s}^{(1)}(z,\zeta)=\{\mathfrak{s}^{(1)}_j(z,\zeta) \}_{j=0}^\infty$  is define by~\eqref{14p.3.25}.

By induction, we get the representation \eqref{14p.5.1} for  $F$. This completes the proof.~
\end{proof}
\begin{remark} According to \cite{Wall}, the continued fraction \eqref{14p.5.1} converges if and only if
\[
	|a_n(z,\zeta)|\geq |b_n(z,\zeta)|+1,\quad \mbox{for all } n\in \mathbb{N}.
\]
\end{remark}


\end{document}